\newtheorem{theorem}{Theorem}[section]
\newtheorem{lemma}{Lemma}[section]
\newtheorem{remark}{Remark}[section]
\newtheorem{corollary}{Corollary}[section]
\newtheorem{proposition}{Proposition}[section]
\numberwithin{equation}{section}
\begin{document}
	
\title{On a reverse of the Tan-Xie inequality for sector matrices and its applications}
\author{ Leila Nasiri$^{1*}$ and   Shigeru Furuichi$^{2}$}
\subjclass[2010]{Primary 47A63, Secondary 46L05, 47A60, 26D15.}
\keywords{Sector  and  accretive  matrices, The  Kantorovich  constant,  Numerical  range,  Determinant  and norm  inequality,  
}

\begin{abstract}
In  this  short paper,  we  establish a  reverse  of  the  derived  inequalities  for  sector  matrices  by  Tan  and  Xie, with Kantorovich constant.
Then,  as  application  of  our  main theorem,  some  inequalities  for  determinant  and  unitarily  invariant  norm  
are  presented.
\end{abstract}
\maketitle
%------------------------------------------------------------------------------------%
\pagestyle{myheadings}
\markboth{\centerline {On a reverse of the Tan-Xie inequality}}
{\centerline {L. Nasiri   and   S.  Furuichi }}
\bigskip
\bigskip
%------------------------------------------------------------------------------------%
%------------------------------------------------------------------------------------%
\section{Introduction}
Let  $ \mathbb{M}_{n}$ and  $ \mathbb{M}^{+}_{n}$  denote  the  set  of  all  $n \times n$  matrices    and  the  set  of  all  $n \times n$ positive  semidefinite  matrices  with  entries  in  $\mathbb C,$  respectivey.    For  
$  A \in \mathbb{M}_{n},$   the  cartesian  decomposition  of $A$  is  presented  as  
$$A=\Re A+i \Im A,$$
where  $\Re A=\frac{A+A^{*}}{2}$ and  $\Im A=\frac{A-A^{*}}{2i}$  are  the  real  and  imaginary  parts
of $A,$  respectively.  The  matrix   $  A \in \mathbb{M}_{n}$  is  called  accretive,  if 
$\Re A$  is  positive  definite.  
Also,  The  matrix   $  A \in \mathbb{M}_{n}$  is  called  accretive-disipative,  if 
both  $\Re A$  and  $\Im A$  are  positive  definite.    
For  $\alpha \in \left[0,\frac{\pi}{2}\right)$,   define  a  sector  as  follows:
$$S_{\alpha}=\{z\in \mathbb C: \Re z >0, |\Im z| \leq (\Re z) \tan \alpha\}.$$
Here,  we  recall  that  the  numerical  range  of  $A \in \mathbb{M}_{n}$ is defined  by 
$$W(A)=\{x^{*}Ax: x\in {\mathbb C}^{n}, x^{*}x=1\}.$$
 The  matrix   $  A \in \mathbb{M}_{n}$  is  called  sector,  if  whose numerical  range  is  contained  in  
sector  $S_{\alpha}.$  In other words,  
$W(A) \subset S_{\alpha}.$   Clearly,  any  sector  matrice  is  accretive  with  extra  information  about  
the  angle  $\alpha$. 
Since  $W(A) \subset S_{\alpha}$ implies that    
$W(X^{*}A X) \subset S_{\alpha}$
for any nonsingular  matrix $X\in \mathbb{M}_{n}$, also  $W(A^{-1}) \subset S_{\alpha}$,   that  is,  inverse  of  every  
sector  matrice  is  sector.  Indeed, by defintion, $W(A)\subset S_{\alpha}$ is equivalent to $\pm \Im A \leq \tan \alpha \Re A$. The inequality is in the Loewner partial order. 
Therefore,  $\pm X\Im A X^* \leq \left(\tan \alpha\right) X\Re A X^*$ which is equivalent to $W(X^{*}A X) \subset S_{\alpha}$. 
In addition, if we take $X=A^{-1}$, then we have
$$
\pm A^{-1}\frac{A-A^*}{2i}\left(A^{-1}\right)^* \leq \left(\tan\alpha\right) A^{-1}\frac{A+A^*}{2}\left(A^{-1}\right)^*.
$$
Thus we have
$$
\mp\frac{A^{-1}-\left(A^{-1}\right)^*}{2i}\leq \left(\tan\alpha\right) \frac{\left(A^{-1}\right)^*+A^{-1}}{2}
$$
which means $\pm \Im A^{-1}\leq \left(\tan\alpha\right) \Re A^{-1}$.
This is equivalent to $W(A^{-1})\subset S_{\alpha}$.

For  $A,B \in \mathbb{M}^{+}_{n},$  the weighted  geometric  mean,   the weighted  arithmetic  mean and  
the weighted  harmonic  mean  are  defined,  respectively,  as  follows:
$$A \sharp_{v} B=A^{\frac{1}{2}}(A^{-\frac{1}{2}} BA^{-\frac{1}{2}})^{v}A^{\frac{1}{2}},
A \nabla_{v} B=(1-v) A+ v B ,
A! _{v} B=\left((1-v) A^{-1}+ v B^{-1} \right)^{-1}.  
$$
It  is  clear  that  the  following  inequality  holds  the  between  of  the weighted HM-GM-AM: 
\begin{align}\label{f1I}
A! _{v} B \leq A \sharp_{v} B \leq  A \nabla_{v} B.
\end{align}
In  \cite{WLioaWu},  the  authors  obtain   a  reverse  of   the  second  inequality  in  \eqref{f1I} using  the  Kantorovich  constant  
for  every positive  unital  linear  map  
$\Phi$  as  follows:
\begin{align}
\Phi^{2}( A \nabla_{v} B)  \leq  K^{2}(h) \Phi^{2}(A \sharp_{v} B).
\end{align}
For  $\Phi=id,$  it is  obvious  that  
  \begin{align}\label{f2I}
A \nabla_{v} B  \leq  K(h) (A \sharp_{v} B).
\end{align}
The  authors  \cite{RaissouliMMoslehianMSFuruichiS}  defined  the weighted  geometric  mean  for   two  accretive matrices $A,B \in \mathbb{M}_{n}$ 
and  $v \in [0,1]$
as  follows:
$$ A \sharp_{v} B=\frac{\sin v \pi }{\pi} \int^{\infty}_{0} s^{v-1} (A^{-1}+sB^{-1})^{-1}ds.$$
Tan  and  Xie  \cite{FupingTanAntaiXie}  studied  the  inequality  \eqref{f1I} for sector  matrices   $A,B \in \mathbb{M}_{n},$   $v \in [0,1]$ 
and   $\alpha \in \left[0,\frac{\pi}{2}\right)$ 
  and  obtained  the  following result:
\begin{equation}\label{f11}
\cos^{2}(\alpha) \Re (A!_{v}B)\leq \Re (A \sharp_{v} B) \leq  
\sec^{2}(\alpha)
\Re (A\nabla_{v}B).
\end{equation}

Inspired by the nice results  \eqref{f11},
 we  are  going  to  present  a  reverse  of  the  double  inequality  
\eqref{f11}   for  two  sector  matrices $A,B \in   \mathbb{M}_{n}$  and  $ v \in [0,1]$ in  this  short paper.  Moreover,  we establish
some  new  determinant  and  norm  inequalities  using  the  deduced  inequality.
\section{A  reverse  of  the  double  inequality  \eqref{f11}}
Our aim  of  this  section  is  to  establish  a reverse  of  the  double  inequality  \eqref{f11}
which  both  generalize  and  extend  the  obtained  results  in recent  years. 
 To  do  this  work,  we   use    Kantorovich  constant  $K(h) :=\dfrac{(h+1)^2}{4h} \geq 1$ for $h:=\dfrac{M}{m} \geq 1$ with $0<m\leq M$ throughout the paper and  several  lemmas  which  we  list  them  as  follows: 

\begin{lemma}{\bf (\cite{MLin1})}  \label{147}
Let  $A \in \mathbb{M}_{n}$  be  accretive,  then 
\begin{equation}\label{f3}
\Re (A^{-1}) \leq \Re^{-1} (A).
\end{equation}
\end{lemma}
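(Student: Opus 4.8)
The plan is to reduce the asserted Loewner inequality to the trivial bound $(I+S^{2})^{-1}\le I$ by a suitable congruence. First I would invoke accretivity to set $H:=\Re A>0$ and $K:=\Im A=K^{*}$, so that $A=H+iK$ and, since $H>0$, the Hermitian matrix $H^{-1/2}$ is available. Putting $B:=H^{-1/2}AH^{-1/2}=I+iS$ with $S:=H^{-1/2}KH^{-1/2}$ Hermitian, I would note that congruence by the Hermitian matrix $H^{-1/2}$ commutes with taking the real part, because $(H^{-1/2}XH^{-1/2})^{*}=H^{-1/2}X^{*}H^{-1/2}$; hence $\Re B=I$, $\Re(A^{-1})=H^{-1/2}\,\Re(B^{-1})\,H^{-1/2}$ and $\Re^{-1}(A)=H^{-1}=H^{-1/2}\cdot I\cdot H^{-1/2}$. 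Since congruence by the fixed invertible matrix $H^{-1/2}$ preserves the Loewner order, the claim \eqref{f3} becomes equivalent to $\Re(B^{-1})\le I$.

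The second step is an explicit computation of $\Re(B^{-1})$. As $S$ is Hermitian, $I+iS$ and $I-iS$ are commuting functions of $S$, so $B^{-1}=(I+iS)^{-1}=(I-iS)(I+S^{2})^{-1}$; taking real parts kills the skew-Hermitian term and leaves $\Re(B^{-1})=(I+S^{2})^{-1}$. Finally $S^{2}\ge 0$ gives $I+S^{2}\ge I>0$, whence $(I+S^{2})^{-1}\le I$ by antitonicity of the inverse on positive definite matrices; undoing the congruence yields $\Re(A^{-1})\le\Re^{-1}(A)$.

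There is no serious obstacle here: the only points that need care are the identity $\Re(H^{-1/2}AH^{-1/2})=H^{-1/2}(\Re A)H^{-1/2}$ and the observation that congruence by a fixed invertible Hermitian matrix is an order isomorphism, so the reduction to $\Re(B^{-1})\le I$ is lossless. If one prefers to avoid square roots, an equivalent route is to write $\Re(A^{-1})=\tfrac12\bigl(A^{-1}+(A^{*})^{-1}\bigr)=A^{-1}H(A^{*})^{-1}$ and combine it with $AH^{-1}A^{*}=H+KH^{-1}K\ge H$, so that conjugating the latter by $A^{-1}$ on the left and $(A^{*})^{-1}$ on the right gives $A^{-1}H(A^{*})^{-1}\le H^{-1}$; I would, however, present the congruence argument as the cleanest.
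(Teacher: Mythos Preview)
Your argument is correct. The paper does not supply its own proof of this lemma; it is quoted from \cite{MLin1} and used as a tool. Your congruence reduction to $B=I+iS$ followed by the computation $\Re(B^{-1})=(I+S^{2})^{-1}\le I$ is the standard derivation, and the alternative route via $AH^{-1}A^{*}=H+KH^{-1}K\ge H$ that you sketch is equally valid and is in fact closer to how the result is often presented in the sector-matrix literature. There is nothing further to compare against in this paper.
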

The  next  lemma  is  a  reverse  of  \eqref{f3}.
\begin{lemma}{\bf (\cite{MLin2})} \label{148}
Let  $A \in \mathbb{M}_{n}$ with  $W(A) \subset S_{\alpha}.$ Then  the  following  inequality holds:
\begin{equation}\label{f4}
 \Re^{-1} (A) \leq \sec^{2}(\alpha) \Re (A^{-1}).
\end{equation}
\end{lemma}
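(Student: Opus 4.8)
The plan is to reduce the claimed Loewner inequality $\Re^{-1}(A)\le \sec^{2}(\alpha)\,\Re(A^{-1})$ to an elementary fact about a single Hermitian matrix, by means of congruence transformations. Note first that $A$ is invertible: being accretive, $Ax=0$ forces $x^{*}\Re(A)x=\Re(x^{*}Ax)=0$, hence $x=0$. Write $B:=\Re(A)>0$ and $C:=\Im(A)=C^{*}$, so that $A=B+iC$; as recalled in the Introduction, the hypothesis $W(A)\subset S_{\alpha}$ is equivalent to
\begin{equation*}
-\tan(\alpha)\,B\le C\le \tan(\alpha)\,B .
\end{equation*}

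The first step is the identity $\Re(A^{-1})=A^{-1}B\,(A^{-1})^{*}$, which follows from $A^{-1}+(A^{*})^{-1}=A^{-1}\bigl(A+A^{*}\bigr)(A^{*})^{-1}=2\,A^{-1}B\,(A^{-1})^{*}$ together with $(A^{-1})^{*}=(A^{*})^{-1}$. Since $A$ is invertible, the map $P\mapsto APA^{*}$ preserves the Loewner order in both directions, so the desired inequality $B^{-1}\le \sec^{2}(\alpha)\,A^{-1}B\,(A^{-1})^{*}$ is equivalent, after conjugating by $A$ on the left and $A^{*}$ on the right, to
\begin{equation*}
A\,B^{-1}A^{*}\le \sec^{2}(\alpha)\,B .
\end{equation*}
A direct expansion using $B=B^{*}$, $C=C^{*}$ gives $A\,B^{-1}A^{*}=(B+iC)B^{-1}(B-iC)=B+CB^{-1}C$, so the inequality to be proved becomes
\begin{equation*}
CB^{-1}C\le\bigl(\sec^{2}(\alpha)-1\bigr)B=\tan^{2}(\alpha)\,B .
\end{equation*}

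Finally I would conjugate this last inequality by the invertible Hermitian matrix $B^{-1/2}$ (again an equivalence): setting $T:=B^{-1/2}CB^{-1/2}=T^{*}$, one has $B^{-1/2}(CB^{-1}C)B^{-1/2}=T^{2}$, so the goal reduces to $T^{2}\le\tan^{2}(\alpha)\,I$. For a Hermitian matrix this is equivalent to all eigenvalues of $T$ lying in $[-\tan(\alpha),\tan(\alpha)]$, i.e. to $-\tan(\alpha)I\le T\le\tan(\alpha)I$; conjugating back by $B^{1/2}$ this is precisely $-\tan(\alpha)B\le C\le\tan(\alpha)B$, which holds by hypothesis. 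This closes the argument.

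I expect no serious obstacle here; the two points that require care are (i) checking that each congruence step is genuinely an equivalence, which rests on the invertibility of $A$ and of $\Re(A)$, and (ii) the non-commutative bookkeeping in the identity $AB^{-1}A^{*}=B+CB^{-1}C$ and in passing from $T^{2}\le\tan^{2}(\alpha)I$ back to a two-sided bound on $C$. It is worth observing that running the same scheme with the trivial estimate $CB^{-1}C\ge 0$ in place of the last step recovers Lemma \ref{147}, so the present statement is exactly the quantitative refinement that the sector hypothesis makes available.
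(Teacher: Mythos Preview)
Your argument is correct in every step: the congruence reductions are valid because $A$ and $\Re(A)$ are invertible, the identity $AB^{-1}A^{*}=B+CB^{-1}C$ is a straightforward expansion, and the final equivalence between $T^{2}\le\tan^{2}(\alpha)I$ and the two-sided bound on $T$ is standard spectral reasoning for a Hermitian matrix.

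As for comparison with the paper: there is nothing to compare against, because the paper does not prove this lemma --- it is quoted from \cite{MLin2} and used as a black box. Your proof therefore supplies what the paper omits. It is in fact essentially the argument one finds in Lin's original paper, carried out via the same congruence-to-$T=B^{-1/2}CB^{-1/2}$ reduction; so you have independently reconstructed the standard proof. The closing remark that the same scheme with $CB^{-1}C\ge 0$ recovers Lemma~\ref{147} is a nice observation and makes clear why the sector hypothesis is exactly what is needed for the reverse inequality.
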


\begin{lemma}{\bf (\cite{RBhatiaFKittaneh})}
Let  $A,B \in \mathbb{B}(H)$  be  positive. Then 
\begin{equation}\label{f5}
\| AB \| \leq \frac{1}{4}\|A+B\|^{2}. 
\end{equation}
\end{lemma}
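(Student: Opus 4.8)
The plan is to obtain \eqref{f5} as the product of two estimates, each contributing a factor $\tfrac12$; write $C=A+B$ for brevity. A little care is needed, because the naive chain $\|AB\|=\|A^{1/2}(A^{1/2}B^{1/2})B^{1/2}\|\le\|A^{1/2}\|\,\|A^{1/2}B^{1/2}\|\,\|B^{1/2}\|$ together with $\|A^{1/2}B^{1/2}\|\le\tfrac12\|C\|$ only yields $\|AB\|\le\tfrac12\|C\|^{2}$, whereas we want $\tfrac14\|C\|^{2}$; so $A$ and $B$ have to be kept coupled throughout.

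\textbf{Step 1.} I would write $AB=A^{1/2}\bigl(A^{1/2}B^{1/2}\bigr)B^{1/2}$ and apply the matrix arithmetic--geometric mean inequality $\|A^{1/2}XB^{1/2}\|\le\tfrac12\|AX+XB\|$, valid for positive $A,B$ and arbitrary bounded $X$ (the operator-norm instance of the Bhatia--Davis inequality). Taking $X=A^{1/2}B^{1/2}$ and using $A\bigl(A^{1/2}B^{1/2}\bigr)+\bigl(A^{1/2}B^{1/2}\bigr)B=A^{3/2}B^{1/2}+A^{1/2}B^{3/2}=A^{1/2}CB^{1/2}$, one gets
\[
\|AB\|\le\tfrac12\,\bigl\|A^{1/2}CB^{1/2}\bigr\|.
\]

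\textbf{Step 2.} I would bound $\|A^{1/2}CB^{1/2}\|$ by a $2\times2$ block argument. Recall first the elementary fact that a positive operator matrix $N=\begin{pmatrix}P&R\\ R^{*}&Q\end{pmatrix}$ on $H\oplus H$ satisfies $\|R\|\le\tfrac12\|N\|$: with $U=\mathrm{diag}(I,-I)$, the self-adjoint operator $\begin{pmatrix}0&R\\ R^{*}&0\end{pmatrix}=\tfrac12(N-UNU^{*})$ is the difference of two positive operators of norm $\|N\|$, hence has norm $\le\tfrac12\|N\|$, while its square is $\mathrm{diag}(RR^{*},R^{*}R)$, so its norm equals $\|R\|$. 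Now set $W=\begin{pmatrix}A^{1/2}\\ B^{1/2}\end{pmatrix}C^{1/2}$ and $N=WW^{*}$; then
\[
N=\begin{pmatrix}A^{1/2}CA^{1/2}& A^{1/2}CB^{1/2}\\ B^{1/2}CA^{1/2}& B^{1/2}CB^{1/2}\end{pmatrix},\qquad \|N\|=\|W^{*}W\|=\|C^{1/2}CC^{1/2}\|=\|C^{2}\|=\|C\|^{2},
\]
and reading off the $(1,2)$ entry gives $\bigl\|A^{1/2}CB^{1/2}\bigr\|\le\tfrac12\|C\|^{2}$.

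Combining the two steps, $\|AB\|\le\tfrac12\cdot\tfrac12\|C\|^{2}=\tfrac14\|A+B\|^{2}$, which is \eqref{f5}; both inequalities above are equalities when $A=B$, so the constant $\tfrac14$ is optimal. I expect the main difficulty to be precisely this optimal constant: the matrix arithmetic--geometric mean inequality in Step~1 and the off-diagonal bound for positive block matrices in Step~2 are exactly what avoids the extra factor~$2$ that a cruder submultiplicative estimate would lose.
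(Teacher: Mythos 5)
Your argument is correct, but there is nothing in the paper to compare it against: the authors state this as a quoted result of Bhatia and Kittaneh \cite{RBhatiaFKittaneh} and give no proof at all. Your two steps both check out. Step 1 is a legitimate application of the McIntosh/Bhatia--Davis arithmetic--geometric mean inequality $\|A^{1/2}XB^{1/2}\|\le\tfrac12\|AX+XB\|$ (valid for the operator norm on $\mathbb{B}(H)$), and the identity $A(A^{1/2}B^{1/2})+(A^{1/2}B^{1/2})B=A^{1/2}(A+B)B^{1/2}$ is exactly right. Step 2 is also sound: with $W=\begin{pmatrix}A^{1/2}\\ B^{1/2}\end{pmatrix}C^{1/2}$ one has $W^{*}W=C^{1/2}(A+B)C^{1/2}=C^{2}$, so $\|WW^{*}\|=\|C\|^{2}$, and your conjugation-by-$\mathrm{diag}(I,-I)$ argument correctly yields the sharp off-diagonal bound $\|R\|\le\tfrac12\|N\|$ for a positive block operator (the cruder factorization $R=P^{1/2}KQ^{1/2}$ would only give $\|R\|\le\|N\|$, so your care here is warranted). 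The only caveat worth flagging is that your proof is not self-contained: it imports the AGM inequality of Step 1 as a black box, and that inequality is of essentially the same depth as the statement being proved --- indeed the cited Bhatia--Kittaneh paper is precisely a study of such AGM-type norm inequalities, and derives $\||AB\||\le\tfrac14\||(A+B)^{2}\||$ for all unitarily invariant norms by a related block-matrix device. For the purposes of this paper, where the lemma is only quoted, your derivation is a perfectly acceptable (and sharper-than-naive) justification, including the observation that $A=B$ gives equality so the constant $\tfrac14$ cannot be improved.
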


\begin{lemma}{\bf (Choi inequality \cite[p.41]{Bhatia2007})}
Let  $A \in \mathbb{B}(H)$  be  positive and let $\Phi$ be a positive unital linear map. Then  we  have
\begin{equation}\label{choi_ineq}
\Phi^{-1}(A) \leq \Phi(A^{-1}).
\end{equation}
\end{lemma}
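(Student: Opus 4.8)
\smallskip
\noindent\emph{Proof strategy.}
The plan is to derive \eqref{choi_ineq} from the positivity of a single $2\times2$ operator block matrix together with the Schur complement criterion. First I would observe that, for $A>0$, the block matrix
\begin{equation*}
M:=\begin{pmatrix} A & I\\ I & A^{-1}\end{pmatrix}
=\begin{pmatrix} A^{1/2}\\ A^{-1/2}\end{pmatrix}\begin{pmatrix} A^{1/2} & A^{-1/2}\end{pmatrix}
\end{equation*}
is positive semidefinite, being of the form $T^{*}T$; here I use that $A^{1/2}$ and $A^{-1/2}$ are well defined and self-adjoint because $A$ is positive definite.

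Next I would apply the map $\Phi$ to each entry of $M$. Since $\Phi$ is unital we have $\Phi(I)=I$, so the resulting block matrix is
\begin{equation*}
\begin{pmatrix}\Phi(A) & I\\ I & \Phi(A^{-1})\end{pmatrix},
\end{equation*}
and the crux is that this matrix is again positive semidefinite. Granting this, since $\Phi(A)$ is positive definite, the Schur complement criterion yields
\begin{equation*}
\begin{pmatrix}\Phi(A) & I\\ I & \Phi(A^{-1})\end{pmatrix}\ge 0
\quad\Longleftrightarrow\quad
\Phi(A^{-1})-\Phi(A)^{-1}\ge 0,
\end{equation*}
and the inequality on the right is exactly $\Phi(A)^{-1}\le\Phi(A^{-1})$, i.e.\ \eqref{choi_ineq}.

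The step I expect to be the main obstacle is justifying that applying $\Phi$ entrywise keeps the $2\times2$ block positive semidefinite; this is the passage from ``positive linear map'' to ``$2$-positive'', and it is automatic for the maps that actually occur in the sequel --- trivially for $\Phi=\mathrm{id}$, and more generally for maps of the form $X\mapsto V^{*}XV$. To handle a completely general positive unital linear map I would instead route the argument through the operator convexity of $f(t)=t^{-1}$ on $(0,\infty)$ --- the block-matrix computation above being one standard way to verify this convexity --- and then invoke the operator Jensen inequality to conclude $\Phi(A)^{-1}=f(\Phi(A))\le\Phi(f(A))=\Phi(A^{-1})$. Finally I would record the equality condition, which is what controls the constant when \eqref{choi_ineq} is later combined with the preceding lemmas and the Kantorovich estimate.
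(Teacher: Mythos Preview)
The paper does not supply its own proof of this lemma; it is quoted as a known result from Bhatia's book, so there is nothing in the paper to compare your argument against. Your block-matrix/Schur-complement strategy is in fact the standard proof, and it is essentially what appears in the cited reference.

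Your worry about the passage from ``positive'' to ``$2$-positive'' is overstated, and you do not need to detour through the operator Jensen inequality. The point is that all four entries of
\[
M=\begin{pmatrix} A & I\\ I & A^{-1}\end{pmatrix}
\]
lie in the \emph{commutative} $C^{*}$-algebra generated by $A$. A positive linear map on a commutative $C^{*}$-algebra is automatically completely positive (this is a classical fact, also in Bhatia), so the restriction of $\Phi$ to that subalgebra is $2$-positive and the entrywise application of $\Phi$ to $M$ preserves positivity. With this observation your first argument goes through for an arbitrary positive unital $\Phi$, and the Schur-complement step finishes the proof exactly as you wrote.

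Finally, the closing remark about recording the equality condition is extraneous: the paper never uses the equality case of \eqref{choi_ineq}; the Kantorovich constant enters through the separate estimate \eqref{f26}, not through any sharpness analysis of the Choi inequality.
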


\begin{lemma}{\bf (\cite{MBakherad})} \label{lemma_Bakherad}
Let  $A,B \in \mathbb{B}(H)$  be  positive and let $r$ be a positive number. 
Then  $A \leq rB$ is equivalent to $\|A^{1/2}B^{-1/2}\|\leq r^{1/2}$.
\end{lemma}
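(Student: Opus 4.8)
The plan is to reduce the operator–norm condition to a Löwner–order condition by means of the $C^{*}$-identity, and then to strip off the square roots with a congruence transformation. Throughout I would work under the standing assumption that $B$ is invertible, so that $B^{1/2}$ and $B^{-1/2}$ are well–defined positive operators; this is the only place where positivity has to be upgraded to strict positivity, and it is automatic in every later application, where $B$ will be (the real part of) a sector matrix.

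First I would record the elementary fact that, for any $T\in\mathbb{B}(H)$ and any $c>0$, one has $\|T\|\le c$ if and only if $T^{*}T\le c^{2}I$: indeed $\|T\|^{2}=\|T^{*}T\|$ by the $C^{*}$-identity, and for the positive operator $T^{*}T$ the scalar bound $\|T^{*}T\|\le c^{2}$ is exactly the operator inequality $T^{*}T\le c^{2}I$. Applying this with $T=A^{1/2}B^{-1/2}$ and $c=r^{1/2}$, and using $(A^{1/2})^{*}=A^{1/2}$, $(B^{-1/2})^{*}=B^{-1/2}$, gives
$$
\|A^{1/2}B^{-1/2}\|\le r^{1/2}
\quad\Longleftrightarrow\quad
B^{-1/2}A^{1/2}\,A^{1/2}B^{-1/2}\le rI
\quad\Longleftrightarrow\quad
B^{-1/2}AB^{-1/2}\le rI .
$$

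It then remains to observe that the congruence $X\mapsto B^{1/2}XB^{1/2}$ is an order isomorphism of the self–adjoint part of $\mathbb{B}(H)$, with order–preserving inverse $X\mapsto B^{-1/2}XB^{-1/2}$. Applying the former to $B^{-1/2}AB^{-1/2}\le rI$ yields $A\le rB$, and applying the latter to $A\le rB$ yields $B^{-1/2}AB^{-1/2}\le rI$; hence the two are equivalent. Chaining this with the displayed equivalence completes the argument. I do not expect any genuine obstacle here: the proof is essentially a bookkeeping exercise with the $C^{*}$-identity and congruence invariance of the Löwner order, and the only subtlety worth flagging is the invertibility of $B$, without which $B^{-1/2}$ — and the statement itself — is meaningless.
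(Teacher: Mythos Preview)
Your argument is correct and is the standard proof of this equivalence: reduce the norm bound to $T^{*}T\le rI$ via the $C^{*}$-identity, then pass to $A\le rB$ by the congruence $X\mapsto B^{1/2}XB^{1/2}$. The paper itself gives no proof of this lemma---it is quoted from \cite{MBakherad}---so there is nothing to compare against; your remark that $B$ must be invertible for $B^{-1/2}$ to make sense is the only caveat, and it is indeed satisfied in all later applications.
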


\begin{theorem}\label{11t}
Let  $A,B \in  \mathbb{M}_{n}$  be  sector, that is,   $W(A),W(B) \subset S_{\alpha}$ for  some  $\alpha \in \left[0,\frac{\pi}{2}\right)$
and $0 \leq  v \leq 1$. Then for every positive unital linear map $\Phi$, we hve the following.
\begin{itemize}
\item[(i)]
If  $0<m I_{n} \leq \Re (A^{-1}), \Re (B^{-1}) \leq MI_{n}.$
 Then,  
\begin{equation}\label{f112}
  \Phi^2\left(\Re (A  \sharp_{v} B)\right)   \leq  
 \sec^{8}(\alpha) K^2(h)   \Phi^2\left(\Re(A!_{v}B\right).
\end{equation}
\item[(ii)]
If  $0<m I_{n} \leq \Re (A), \Re (B) \leq MI_{n}.$
 Then,  
\begin{equation}\label{f212}
 K^{-2}(h)\cos^{8}(\alpha)  
\Phi^2\left( \Re(A\nabla_{v}B)\right)  \leq 
  \Phi^2\left(\Re (A  \sharp_{v} B)\right).
\end{equation}
\end{itemize}
\end{theorem}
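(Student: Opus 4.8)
The plan is to prove both parts by the standard ``squaring'' argument for positive unital linear maps: reduce each of~\eqref{f112} and~\eqref{f212} to a single operator‑norm estimate, and then feed in the reverse arithmetic–geometric mean inequality~\eqref{f2I} to produce the Kantorovich constant. Throughout I will use that $S_{\alpha}$ is a convex cone closed under inversion, so that $W(A^{-1}),W(B^{-1})\subset S_{\alpha}$ and, by the discussion in the Introduction, all of $A\nabla_{v}B$, $A\sharp_{v}B$, $A!_{v}B$, $A^{-1}\nabla_{v}B^{-1}$, $A^{-1}\sharp_{v}B^{-1}$, $A^{-1}!_{v}B^{-1}$ are sector with angle $\alpha$; in particular Lemmas~\ref{147} and~\ref{148} apply to each of them, \eqref{f11} may be invoked both for $(A,B)$ and for $(A^{-1},B^{-1})$, and $(A\sharp_{v}B)^{-1}=A^{-1}\sharp_{v}B^{-1}$, $(A!_{v}B)^{-1}=A^{-1}\nabla_{v}B^{-1}$, $(A\nabla_{v}B)^{-1}=A^{-1}!_{v}B^{-1}$.

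Both inequalities have the shape $\Phi^{2}(U)\le\sec^{8}(\alpha)\,K^{2}(h)\,\Phi^{2}(V)$ for positive definite $U,V$, namely $(U,V)=\bigl(\Re(A\sharp_{v}B),\Re(A!_{v}B)\bigr)$ for (i) and $(U,V)=\bigl(\Re(A\nabla_{v}B),\Re(A\sharp_{v}B)\bigr)$ for (ii). Since $\Phi$ is positive and unital, $\Phi(U),\Phi(V)$ are positive definite, so by Lemma~\ref{lemma_Bakherad} it suffices to show $\bigl\|\Phi(U)\,\Phi^{-1}(V)\bigr\|\le\sec^{4}(\alpha)K(h)$. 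As $\Phi(U)$ and $\Phi^{-1}(V)$ are positive, \eqref{f5} gives $\|\Phi(U)\Phi^{-1}(V)\|\le\frac14\|\Phi(U)+\Phi^{-1}(V)\|^{2}$; by Choi's inequality~\eqref{choi_ineq}, $\Phi^{-1}(V)\le\Phi(V^{-1})$, so $\Phi(U)+\Phi^{-1}(V)\le\Phi(U+V^{-1})$, and since a positive unital map is norm‑contractive, $\|\Phi(U)+\Phi^{-1}(V)\|\le\|U+V^{-1}\|$. Thus everything reduces to the single estimate $\|U+V^{-1}\|\le 2\sec^{2}(\alpha)\sqrt{K(h)}$.

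To reach this in case (i) I would, on the one hand, use Lemma~\ref{148} to write $V^{-1}=\Re^{-1}(A!_{v}B)\le\sec^{2}(\alpha)\,\Re\bigl((A!_{v}B)^{-1}\bigr)=\sec^{2}(\alpha)\bigl(\Re(A^{-1})\nabla_{v}\Re(B^{-1})\bigr)$, and on the other hand use Lemma~\ref{147} to get $U=\Re(A\sharp_{v}B)\le\Re^{-1}\bigl((A\sharp_{v}B)^{-1}\bigr)=\Re^{-1}(A^{-1}\sharp_{v}B^{-1})$, then bound $\Re(A^{-1}\sharp_{v}B^{-1})$ from below by a $\cos^{4}(\alpha)$‑multiple of $\Re(A^{-1})!_{v}\Re(B^{-1})$ by applying~\eqref{f11} to $A^{-1},B^{-1}$ together with Lemmas~\ref{147},~\ref{148} once more; this leaves $\|U+V^{-1}\|$ controlled by the weighted arithmetic and harmonic means of $\Re(A^{-1}),\Re(B^{-1})$, which satisfy $mI_{n}\le\Re(A^{-1}),\Re(B^{-1})\le MI_{n}$, and one closes by inserting~\eqref{f2I} for this positive pair and using the convexity of $t\mapsto t+c/t$ on the resulting spectral interval. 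Case (ii) is dual: $U=\Re(A\nabla_{v}B)=\Re(A)\nabla_{v}\Re(B)$ lies in $[mI_{n},MI_{n}]$, while Lemma~\ref{148} and~\eqref{f11} for $A^{-1},B^{-1}$ give $V^{-1}=\Re^{-1}(A\sharp_{v}B)\le\sec^{2}(\alpha)\Re(A^{-1}\sharp_{v}B^{-1})\le\sec^{4}(\alpha)\bigl(\Re(A^{-1})\nabla_{v}\Re(B^{-1})\bigr)$, and Lemma~\ref{147} converts this into a bound through $\Re^{-1}(A),\Re^{-1}(B)$; applying~\eqref{f2I} to the pair $\Re(A),\Re(B)$ and the same convexity argument yields~\eqref{f212}.

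The reduction in the second paragraph is routine bookkeeping. The main obstacle is the last estimate $\|U+V^{-1}\|\le 2\sec^{2}(\alpha)\sqrt{K(h)}$: bounding $\|U\|$ and $\|V^{-1}\|$ separately and adding is too lossy and gives the wrong constant, so the terms must be grouped and \eqref{f2I} inserted at exactly the right place (at the level of the Bhatia–Kittaneh step), all the while keeping track that the successive uses of Lemmas~\ref{147},~\ref{148} and~\eqref{f11} contribute precisely the four factors of $\sec^{2}(\alpha)$ that assemble into $\sec^{8}(\alpha)$, and that the Kantorovich constant survives to the second power and no higher.
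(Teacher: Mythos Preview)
Your reduction in the second paragraph is essentially the paper's, but there is a fatal slip: you apply \eqref{f5} to the pair $\bigl(\Phi(U),\Phi^{-1}(V)\bigr)$ and land on the target $\|U+V^{-1}\|\le 2\sec^{2}(\alpha)\sqrt{K(h)}$. This inequality is not scale‑invariant and is simply false. Take $\alpha=0$, $A=B$ positive, so $U=V=A$; the hypothesis of (i) is $mI\le A^{-1}\le MI$, and the claimed bound becomes $\|A+A^{-1}\|\le 2\sqrt{K(h)}=\sqrt{h}+1/\sqrt{h}$, which fails already for $m=1/100$, $M=1/50$ (left side $\approx 100$, right side $\approx 2.12$). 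The same objection kills the plan in case (ii).

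What the paper does instead is apply \eqref{f5} to the \emph{rescaled} pair $\bigl(Mm\,\Phi(U),\Phi^{-1}(V)\bigr)$ (respectively $\bigl(Mm\,\Phi^{-1}(U),\sec^{4}(\alpha)\Phi(V)\bigr)$ in (ii)), so that after Choi, \eqref{f4} and \eqref{f11} the sum collapses to $\sec^{2}(\alpha)\,\Phi\bigl(Mm\,\Re(A\nabla_v B)+\Re(A^{-1}\nabla_v B^{-1})\bigr)$. The Kantorovich constant then enters not through \eqref{f2I} or any convexity trick, but through the elementary spectral inequality
\[
X+Mm\,X^{-1}\le (M+m)I\qquad\text{for }mI\le X\le MI,
\]
applied to $X=\Re(A^{-1})$ and $X=\Re(B^{-1})$ (for (i)), giving $Mm\,\Re(A\nabla_v B)+\Re(A^{-1}\nabla_v B^{-1})\le (M+m)I$ after one use of Lemma~\ref{147}. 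Dividing by $Mm$ produces $K(h)=(M+m)^2/(4Mm)$. Your third paragraph, which tries to bound $U$ and $V^{-1}$ separately through iterated uses of Lemmas~\ref{147}, \ref{148} and \eqref{f11} and then ``insert \eqref{f2I}'', is both more complicated and aimed at the wrong target; once the $Mm$ weighting is in place, only one application each of \eqref{f4} and \eqref{f11} is needed and the four factors of $\sec^{2}(\alpha)$ fall out directly.
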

\begin{proof}
\begin{itemize}
\item[(i)]
From $0<m I_{n} \leq \Re (A^{-1}), \Re (B^{-1}) \leq MI_{n},$  we  get 
$$ \Re(A^{-1})+Mm \Re(A^{-1})^{-1} \leq  M+m.$$
$$ \Re(B^{-1})+Mm \Re(B^{-1})^{-1} \leq  M+m.$$
If  we  multiply  both  sides  of  the  first  inequality  and  the  second  inequality,  respectively,  by  $1-v$
and  $v$, we  obtain  
$$(1-v) \Re(A^{-1})+(1-v) Mm \Re(A^{-1})^{-1} \leq (1-v)(M+m).$$
$$v \Re(B^{-1})+ v Mm \Re(B^{-1})^{-1} \leq v(M+m).$$ 
As  the  inverse  of  every  sector  matrice  is  sector  again  and  every  
 sector  matrice is  accretive  as  explained  in  Introduction,  
  it  follows  that

\begin{eqnarray}
&& Mm\Re ((1-v) A+v B)+ \Re((1-v) A^{-1}+v B^{-1})  \nonumber  \\
&& \leq  Mm( (1-v)  \Re^{-1} (A^{-1})+v  \Re^{-1} (B^{-1}))+ \Re((1-v) A^{-1}+v B^{-1}) \quad \text{(by \ref{f3})} \nonumber  \\
&& \leq M+m.\label{f26}
\end{eqnarray}
Thus we have,  
\begin{eqnarray}
&&\| \Phi\left(\Re (A  \sharp_{v} B)\right)
 Mm\Phi^{-1}\left( \Re(A!_{v}B)\right)\|  \nonumber \\
 &&  \leq 
\frac{1}{4}\|  Mm \Phi\left(\Re (A  \sharp_{v} B)\right)+\Phi^{-1}\left( \Re(A!_{v}B)\right)\|^{2} \quad \text{(by \eqref{f5})} \nonumber \\
 &&  \leq 
\frac{1}{4}\|  Mm \Phi\left(\Re (A  \sharp_{v} B)\right)+\Phi\left( \Re^{-1}(A!_{v}B)\right)\|^{2} \quad \text{(by \eqref{choi_ineq})} \nonumber \\
 &&  \leq \frac{1}{4}
\|  Mm \Phi\left(\Re (A  \sharp_{v} B)\right)+\sec^{2}(\alpha)
 \Phi\left(\Re((1-v) A^{-1}+ v B^{-1})\right)\|^{2} \quad \text{(by \eqref{f4})}  \nonumber \\
  &&  \leq 
\frac{1}{4} \|  \sec^{2}(\alpha)Mm \Phi\left(\Re ((1-v) A+ v B)\right)+\sec^{2}(\alpha)
 \Phi\left(\Re((1-v) A^{-1}+ v B^{-1})\right)\|^{2} \quad  \text{(by  \eqref{f11})}  \nonumber \\
  && =\frac{1}{4} \sec^{4}(\alpha) \| \Phi\left(Mm \Re ((1-v) A+ v B)+
 \Re((1-v) A^{-1}+ v B^{-1})\right)\|^{2} \nonumber \\
  &&  \leq  \frac{\sec^{4}(\alpha)}{4}(M+m)^{2} \quad \text{(by  \eqref{f26} )}. \nonumber
\end{eqnarray}
\item[(ii)]
In  similar way,  we   have  
\begin{equation}\label{eq2.6_ii}
Mm\left((1-v) \Re^{-1} (A) +v \Re^{-1} (B)\right)+(1-v) \Re (A)+v \Re (B) \leq M+m 
\end{equation}
from the conditions on $\Re (A)$ and $\Re (B)$ in (ii). Thus we have
\begin{eqnarray}
&& \| \sec^{4}(\alpha)\Phi^{-1}\left(\Re (A  \sharp_{v} B)\right)
 Mm \Phi\left(\Re(A\nabla_{v}B)\right)\|  \nonumber \\
&&  \leq \frac{1}{4}
\|  Mm \Phi^{-1}\left(\Re (A  \sharp_{v} B)\right)+\sec^{4}(\alpha)\Phi\left(
\Re(A\nabla_{v}B)\right)\|^{2}  \quad
 \text{(by \eqref{f5})}\nonumber \\
 &&  \leq \frac{1}{4}
\|  Mm \Phi\left(\Re^{-1} (A  \sharp_{v} B)\right)+\sec^{4}(\alpha)\Phi\left(
\Re(A\nabla_{v}B)\right)\|^{2}  \quad
 \text{(by \eqref{choi_ineq})}\nonumber \\
 &&  \leq  \frac{1}{4}
\| \sec^{2}(\alpha) Mm \Phi\left(\Re \left((A  \sharp_{v} B)^{-1}\right)\right)+\sec^{4}(\alpha)\Phi\left(
\Re(A\nabla_{v}B)\right)\|^{2}  \quad  \text{(by   \eqref{f4})}
\nonumber \\
 &&  = \frac{1}{4}
\| \sec^{2}(\alpha) Mm \Phi\left(\Re (A^{-1}  \sharp_{v} B^{-1})\right)+\sec^{4}(\alpha)\Phi\left(
\Re(A\nabla_{v}B)\right)\|^{2}  \nonumber \\ 
&&  \leq  
\frac{1}{4}
\| \sec^{4}(\alpha) Mm \Phi\left(\Re((1-v) A^{-1}+ v B^{-1})\right)+\sec^{4}(\alpha)\Phi\left(
\Re(A\nabla_{v}B)\right)\|^{2} \quad
\text{(by  \eqref{f11})} \nonumber \\ 
&&  \leq  
\frac{1}{4}
\| \sec^{4}(\alpha) Mm\Phi\left( ((1-v) \Re^{-1}( A)+ v \Re^{-1} (B))\right)+\sec^{4}(\alpha)\Phi\left(
\Re((1-v) A+v B)\right)\|^{2} \quad 
 \text{(by  \eqref{f3})}  \nonumber \\ 
   &&  \leq 
\frac{\sec^{8}(\alpha)}{4}(M+m)^{2}.  \quad \text{(by \eqref{eq2.6_ii})} \nonumber
\end{eqnarray}
\end{itemize}
Thus we have the desired results (i) and (ii) by Lemma \ref{lemma_Bakherad}.
\end{proof}

\begin{remark}
The inequalities given in Theorem \ref{11t} give reverses for the inequalities \eqref{f11} when $\Phi$ is an identity map.
In addition, our inequality \eqref{f212} recovers the  inequality  \eqref{f2I} for $\alpha = 0$ and  $\Phi$ is an identity map. 
\end{remark}

\begin{remark}
For  $v=\frac{1}{2},$  
the  inequalities    \eqref{f112} and  \eqref{f212}  recover  
\cite[Theorem2.18]{YangCLuF} and \cite[Theorem2.10]{YangCLuF} ,  respectively. 
This  shows  that  our  results  contain  the wide  class  of  inequalities. 
\end{remark}

\section{Applications  }

Making use  of  the  inequalities \eqref{f112}  and \eqref{f212}, we  prove  some  determinant  inequalities.  For  proving  the  results  of  this  section,  we   need   to  state  the  following  useful  lemmas  
which  the  first lemma is known as the Ostrowski-Taussky  inequality  and  the  second  lemma  is  a its reverse.
\begin{lemma}{\bf (\cite{HornRAJohnsonCR})}  \label{566}
Let  $A \in \mathbb{M}_{n}$  be  accretive. Then    
\begin{equation}\label{f7}
\det (\Re A) \leq |\det A|. 
\end{equation}
\end{lemma}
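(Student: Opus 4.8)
The plan is to reduce \eqref{f7} to a statement about the eigenvalues of a single Hermitian matrix by means of a congruence transformation. Write the Cartesian decomposition $A=H+iK$ with $H:=\Re A$ and $K:=\Im A$; by the accretivity hypothesis $H$ is positive definite, while $K$ is automatically Hermitian. Since $H>0$ we may factor
$$
A=H^{1/2}\bigl(I_n+i\,H^{-1/2}KH^{-1/2}\bigr)H^{1/2},
$$
so that, on taking determinants,
$$
\det A=\det(H)\,\det(I_n+iS),\qquad S:=H^{-1/2}KH^{-1/2}.
$$
The key point is that $S$ is Hermitian, hence unitarily diagonalizable with real eigenvalues $\lambda_1,\dots,\lambda_n$.

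Next I would compute the modulus of the second factor. The eigenvalues of $I_n+iS$ are the numbers $1+i\lambda_j$, so
$$
|\det(I_n+iS)|=\prod_{j=1}^{n}|1+i\lambda_j|=\prod_{j=1}^{n}\sqrt{1+\lambda_j^{2}}\ \geq\ 1,
$$
the final inequality being immediate since each factor is at least $1$. Because $\det(H)=\det(\Re A)$ is a positive real number, combining the two displays gives
$$
|\det A|=\det(\Re A)\prod_{j=1}^{n}\sqrt{1+\lambda_j^{2}}\ \geq\ \det(\Re A),
$$
which is exactly \eqref{f7}, with equality precisely when $S=0$, i.e. when $\Im A=0$.

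As for where the hypotheses enter: accretivity is used only to guarantee that $H^{1/2}$ and $H^{-1/2}$ exist and that $\det(\Re A)$ is positive; no positivity of $\Im A$ is required. There is essentially no serious obstacle in this argument — the one thing to be careful about is the legitimacy of the congruence factorization and the observation that congruence by $H^{1/2}$ turns the Hermitian part into the genuinely Hermitian matrix $S$, after which the estimate is purely a one-line computation on eigenvalues. (An alternative route would be induction on $n$ via a Schur-complement block decomposition of $A$, but the diagonalization argument above is shorter and makes the equality case transparent.)
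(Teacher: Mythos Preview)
Your argument is correct: the congruence factorization $A=H^{1/2}(I_n+iS)H^{1/2}$ with $S=H^{-1/2}KH^{-1/2}$ Hermitian reduces the inequality to $\prod_j\sqrt{1+\lambda_j^2}\geq 1$, and you have correctly tracked where accretivity is used and identified the equality case.

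As for comparison with the paper: the paper does not supply a proof of this lemma at all --- it is quoted as a known result from Horn--Johnson, so there is no in-paper argument to compare against. Your proof is in fact the standard one (it is essentially the argument given in Horn--Johnson and elsewhere for the Ostrowski--Taussky inequality), so nothing is lost or gained relative to the literature; you have simply reproduced the classical proof.
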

\begin{lemma}{\bf (\cite{MLin1})}  \label{567}
Let  $A \in \mathbb{M}_{n}$  
 such  that  $W(A) \subset S_{\alpha}.$ Then
\begin{align}\label{f8}
 |\det A|  \leq \sec^{n}(\alpha) \det (\Re A). 
\end{align}
\end{lemma}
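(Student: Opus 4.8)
The plan is to reduce this determinant inequality to a spectral statement about a single Hermitian matrix by exploiting the Cartesian decomposition. I would write $R:=\Re A$ and $S:=\Im A$. Since $W(A)\subset S_{\alpha}$, the matrix $A$ is accretive, so $R>0$ is invertible; moreover, as recalled in the Introduction, $W(A)\subset S_{\alpha}$ is equivalent to the pair of Loewner inequalities $\pm S\leq(\tan\alpha)R$. Using invertibility of $R$, I would factor
$$
A=R^{1/2}\bigl(I_{n}+iR^{-1/2}SR^{-1/2}\bigr)R^{1/2}
$$
and set $T:=R^{-1/2}SR^{-1/2}$, which is Hermitian.

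The next step is to translate the sector hypothesis into a bound on the spectrum of $T$. Conjugating $\pm S\leq(\tan\alpha)R$ by $R^{-1/2}$, a congruence that preserves the Loewner order, yields $\pm T\leq(\tan\alpha)I_{n}$, so every eigenvalue $t_{j}$ of $T$ lies in $[-\tan\alpha,\tan\alpha]$. Then multiplicativity of the determinant gives
$$
|\det A|=\det(R)\,\bigl|\det(I_{n}+iT)\bigr|=\det(\Re A)\prod_{j=1}^{n}\bigl|1+it_{j}\bigr|=\det(\Re A)\prod_{j=1}^{n}\sqrt{1+t_{j}^{2}}.
$$
Since $\sqrt{1+t_{j}^{2}}\leq\sqrt{1+\tan^{2}\alpha}=\sec\alpha$ for each $j$, the product is bounded by $\sec^{n}(\alpha)$, which is exactly the claimed inequality $|\det A|\leq\sec^{n}(\alpha)\det(\Re A)$.

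I do not expect a serious obstacle here; the computation is routine once the factorization is in place, and the only point needing a little care is the spectral step, where the precise meaning of $W(A)\subset S_{\alpha}$ (the two Loewner inequalities) and the order preservation of congruence by $R^{-1/2}$ are both used. If one wishes to avoid diagonalizing $T$, the same bound follows from $|\det(I_{n}+iT)|^{2}=\det\bigl((I_{n}+iT)(I_{n}-iT)\bigr)=\det(I_{n}+T^{2})$ together with $I_{n}+T^{2}\leq\sec^{2}(\alpha)I_{n}$. This inequality is the natural reverse companion to the Ostrowski--Taussky inequality $\det(\Re A)\leq|\det A|$ of Lemma \ref{566}, and the two together pin down $|\det A|$ up to the factor $\sec^{n}(\alpha)$.
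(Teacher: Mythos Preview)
Your argument is correct: the factorization $A=R^{1/2}(I_{n}+iT)R^{1/2}$ with $T=R^{-1/2}SR^{-1/2}$ reduces the claim to the elementary bound $\sqrt{1+t_{j}^{2}}\leq\sec\alpha$, and every step (the congruence preserving the Loewner order, the spectral computation of $|\det(I_{n}+iT)|$) is justified. Note, however, that the paper does not supply its own proof of this lemma; it is quoted as a known result from \cite{MLin1}, so there is no in-paper argument to compare against. Your proof is in fact the standard one found in the literature for this reverse Ostrowski--Taussky inequality.
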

\begin{corollary}\label{11191c}
Let  $A,B \in \mathbb{M}_{n}$ with $W(A) , W(B)\subset S_{\alpha}$ and $0 \leq  v \leq 1$. 
\begin{itemize}
\item[(i)]
If  $0<m I_{n} \leq \Re (A^{-1}), \Re (B^{-1}) \leq MI_{n}$,
 then we have
\begin{equation}\label{20}
|\det(A \sharp_{v} B)|  
  \leq  \sec^{5n}(\alpha) K^n(h) |\det(A!_vB)|.
\end{equation}
\item[(ii)]
If  $0<m I_{n} \leq \Re (A), \Re (B) \leq MI_{n}$, 
 then we have, 
\begin{equation}\label{21}
|\det(A \sharp_{v} B)|  
  \geq  \cos^{5n}(\alpha) K^{-n}(h) |\det(A\nabla_vB)|.
\end{equation}
\end{itemize}
\end{corollary}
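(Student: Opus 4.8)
The plan is to deduce the two determinant inequalities from the operator inequalities of Theorem~\ref{11t} taken with $\Phi=\mathrm{id}$, sandwiching the resulting Loewner inequalities between the Ostrowski--Taussky inequality \eqref{f7} and its reverse \eqref{f8}. The one external ingredient I need is that the weighted geometric mean of two sector matrices is again a sector matrix, i.e.\ $W(A\sharp_v B)\subset S_\alpha$ whenever $W(A),W(B)\subset S_\alpha$; together with the fact recalled in the Introduction that inverses, nonnegative linear combinations, and hence harmonic means of sector matrices stay in $S_\alpha$, this tells me that $A\sharp_v B$, $A!_v B$ and $A\nabla_v B$ are all sector, in particular accretive, so every lemma of Section~3 applies to them and their real parts are positive definite.

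For (i), set $\Phi=\mathrm{id}$ in \eqref{f112} to get $\bigl(\Re(A\sharp_v B)\bigr)^2\le \sec^8(\alpha)K^2(h)\bigl(\Re(A!_v B)\bigr)^2$. Applying $\det(\cdot)$, which is order preserving and multiplicative on positive definite matrices and satisfies $\det(cX)=c^n\det X$, and then taking positive square roots, I obtain $\det\bigl(\Re(A\sharp_v B)\bigr)\le \sec^{4n}(\alpha)K^n(h)\det\bigl(\Re(A!_v B)\bigr)$. Now I chain three inequalities: $|\det(A\sharp_v B)|\le \sec^n(\alpha)\det\bigl(\Re(A\sharp_v B)\bigr)$ by \eqref{f8}, then this last bound, then $\det\bigl(\Re(A!_v B)\bigr)\le |\det(A!_v B)|$ by \eqref{f7}; adding the exponents $n+4n=5n$ of $\sec(\alpha)$ gives exactly \eqref{20}. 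Part (ii) is the mirror image: with $\Phi=\mathrm{id}$ in \eqref{f212} I get $K^{-2}(h)\cos^8(\alpha)\bigl(\Re(A\nabla_v B)\bigr)^2\le \bigl(\Re(A\sharp_v B)\bigr)^2$, hence $\det\bigl(\Re(A\sharp_v B)\bigr)\ge K^{-n}(h)\cos^{4n}(\alpha)\det\bigl(\Re(A\nabla_v B)\bigr)$; combining $|\det(A\sharp_v B)|\ge \det\bigl(\Re(A\sharp_v B)\bigr)$ from \eqref{f7} with this bound and with $\det\bigl(\Re(A\nabla_v B)\bigr)\ge \cos^n(\alpha)|\det(A\nabla_v B)|$ (which is \eqref{f8} rearranged) yields \eqref{21}, the exponent being $\cos^{4n+n}(\alpha)=\cos^{5n}(\alpha)$.

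The main obstacle, such as it is, is the sector-stability of $\sharp_v$: without $W(A\sharp_v B)\subset S_\alpha$ one cannot invoke \eqref{f7}--\eqref{f8} for the geometric mean, and this fact about the integral representation of $\sharp_v$ for accretive matrices must be quoted from the literature rather than re-proved here. Everything else is bookkeeping -- passing from a squared Loewner inequality to a determinant inequality via monotonicity and multiplicativity of $\det$, and tracking the powers of $\sec(\alpha)$ and $\cos(\alpha)$, which in both parts collapse to $5n=n+4n$.
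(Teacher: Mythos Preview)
Your proposal is correct and follows essentially the same route as the paper: apply \eqref{f8} to pass from $|\det(A\sharp_v B)|$ to $\det(\Re(A\sharp_v B))$, use Theorem~\ref{11t} with $\Phi=\mathrm{id}$ to compare with $\det(\Re(A!_v B))$ (resp.\ $\det(\Re(A\nabla_v B))$), and close with \eqref{f7} (resp.\ \eqref{f8}). Your explicit flagging of the sector-stability of $A\sharp_v B$, $A!_v B$, $A\nabla_v B$ is a point the paper leaves implicit, but otherwise the arguments coincide.
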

\begin{proof}
First,  we  prove  \eqref{20}.  Since $\det(cA)=c^n\det A$ for scalar $c>0$ and $A\in \mathbb{M}_{n}$ in general, we have 
\begin{eqnarray*}
|\det(A \sharp_{v} B)|& \leq & \sec^{n}(\alpha) \det( \Re (A \sharp_{v} B)) 
 \quad \text{(by  \eqref{f8})}  \\ 
& \leq& \sec^{5n}(\alpha) K^n(h) \det(\Re(A!_vB)
 \quad \text{(by  \eqref{f112})}  \\ 
& \leq& \sec^{5n}(\alpha) K^n(h) |\det(A!_vB)|
  \quad \text{(by  \eqref{f7})}.
\end{eqnarray*}
The  inequality \eqref{21} can be proven similarly
\begin{eqnarray*}
|\det(A \sharp_{v} B)|& \geq & \det( \Re (A \sharp_{v} B)) 
 \quad \text{(by  \eqref{f7})}   \\ 
& \geq& \cos^{4n}(\alpha) K^{-n}(h) \det(\Re(A\nabla_vB)
  \quad \text{(by  \eqref{f212})} \\ 
&\geq&  \cos^{5n}(\alpha) K^{-n}(h) |\det(A\nabla_vB )|
 \quad  \text{(by  \eqref{f8})}.
\end{eqnarray*}
This  proves  the  results  as  desired.
\end{proof}

\begin{proposition}\label{11191p}
Let  $A,B \in \mathbb{M}_{n}$ with $W(A) , W(B)\subset S_{\alpha}.$ Then  
\begin{equation*}
|\det(A \sharp B)|   \leq  \frac {\sec^{4n}(\alpha)}{2^n} |\det(I_{n}+A)|\cdot|\det(I_{n}+B)|. 
\end{equation*}

\end{proposition}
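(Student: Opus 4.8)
The plan is to reduce the statement to one about real parts. Writing $A\sharp B:=A\sharp_{1/2}B$, I would first pass from $A\sharp B$ to $\Re(A\sharp B)$ via the Ostrowski--Taussky-type bound \eqref{f8}, then invoke the Tan--Xie inequality \eqref{f11} at $v=\tfrac12$ to replace the geometric mean by the arithmetic mean $\tfrac12(\Re A+\Re B)$, and finally run this positive definite matrix back through \eqref{f7}, inserting one elementary determinant inequality that simultaneously produces the factor $2^{-n}$ and the matrices $I_n+A$, $I_n+B$.

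In detail: first note that $W(I_n+A)=1+W(A)\subseteq 1+S_\alpha\subseteq S_\alpha$, and likewise for $B$, so $I_n+A$ and $I_n+B$ are again sector (in particular accretive) matrices; also $A\sharp B$ is a sector matrix with $W(A\sharp B)\subseteq S_\alpha$ by the properties of the accretive geometric mean from \cite{RaissouliMMoslehianMSFuruichiS}. Then I would chain the following: (a) $|\det(A\sharp B)|\le\sec^{n}(\alpha)\det(\Re(A\sharp B))$ by \eqref{f8} applied to $A\sharp B$; (b) $\det(\Re(A\sharp B))\le\sec^{2n}(\alpha)\det\!\big(\Re(A\nabla_{1/2}B)\big)=2^{-n}\sec^{2n}(\alpha)\det(\Re A+\Re B)$ by \eqref{f11} with $v=\tfrac12$ and monotonicity of $\det$ on the positive cone; (c) $\det(\Re A+\Re B)\le\det(I_n+\Re A)\det(I_n+\Re B)=\det(\Re(I_n+A))\det(\Re(I_n+B))$ by the auxiliary inequality $\det(X+Y)\le\det(I_n+X)\det(I_n+Y)$ for positive definite $X,Y$; (d) $\det(\Re(I_n+A))\le|\det(I_n+A)|$ and $\det(\Re(I_n+B))\le|\det(I_n+B)|$ by \eqref{f7} applied to the accretive matrices $I_n+A$ and $I_n+B$. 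Multiplying (a)--(d) gives $|\det(A\sharp B)|\le 2^{-n}\sec^{3n}(\alpha)|\det(I_n+A)||\det(I_n+B)|$, which already yields the stated estimate since $\sec\alpha\ge1$ (so the constant $\sec^{4n}(\alpha)$ is comfortably met).

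The only ingredient not already available in the paper is the auxiliary inequality used in (c), and I expect this to be the main point, as it is where the non-commutativity of $\Re A$ and $\Re B$ has to be handled; everything else is a mechanical concatenation of \eqref{f8}, \eqref{f11} and \eqref{f7} plus the observation $1+S_\alpha\subseteq S_\alpha$. I would prove it by setting $\phi(Z):=\log\det(I_n+Z)$: this is concave on positive semidefinite matrices (concavity of $\log\det$ precomposed with an affine map), and it satisfies $\phi(2Z)\le2\phi(Z)$ because $1+2\mu\le(1+\mu)^2$ for every eigenvalue $\mu\ge0$ of $Z$. Hence $\phi(X+Y)=\phi\!\big(2\cdot\tfrac{X+Y}{2}\big)\le2\phi\!\big(\tfrac{X+Y}{2}\big)\le\phi(X)+\phi(Y)$, and therefore $\det(X+Y)\le\det(I_n+X+Y)\le\det(I_n+X)\det(I_n+Y)$, as required.
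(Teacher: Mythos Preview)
Your proof is correct, and for the first two steps it coincides with the paper's: both start with \eqref{f8} applied to $A\sharp B$, then use the sector AM--GM bound (your \eqref{f11} at $v=\tfrac12$ is exactly \cite[Eq.(10)]{MLin2} which the paper cites) to reach $2^{-n}\sec^{3n}(\alpha)\det(\Re A+\Re B)$. The two arguments diverge after this point. The paper applies \eqref{f7} to $A+B$ to pass to $|\det(A+B)|$, and then invokes the ready-made sector-matrix inequality \cite[Eq.(13)]{YangCLuF}, namely $|\det(A+B)|\le\sec^{n}(\alpha)\,|\det(I_n+A)|\,|\det(I_n+B)|$, which contributes the fourth factor $\sec^{n}(\alpha)$. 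You instead remain on the positive-definite cone, prove the elementary estimate $\det(X+Y)\le\det(I_n+X)\det(I_n+Y)$ for positive definite $X,Y$ via the subadditivity of $\log\det(I_n+\,\cdot\,)$, and only afterwards apply \eqref{f7} to the accretive matrices $I_n+A$, $I_n+B$. This route is self-contained (it does not need the external citation \cite{YangCLuF}) and actually yields the sharper constant $2^{-n}\sec^{3n}(\alpha)$; the stated $\sec^{4n}(\alpha)$ then follows a fortiori, as you note.
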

\begin{proof}
To  prove  the   assertion,  compute  
\begin{eqnarray*}
&& |\det(A \sharp B)|    \leq  \sec^{n}(\alpha) \det(\Re (A \sharp B))  \quad \text{(by  \eqref{f8})} \\
&&  \leq  \frac {\sec^{3n}(\alpha)}{2^n}
\det(\Re (A +B))   \quad \text{(by  \cite[Eq.(10)]{MLin2})}  \\
&&  \leq  \frac {\sec^{3n}(\alpha)}{2^n}
|\det(A +B)|   \quad \text{(by  \eqref{f7})}  \\
&&  \leq  \frac {\sec^{4n}(\alpha)}{2^n}
|\det(I_{n}+A)|\cdot|\det(I_{n}+B)| \quad 
\text{(by \cite[Eq.(13)]{YangCLuF})}.     
\end{eqnarray*}
\end{proof}

Note that we have the following inequality for the weighted means
$$
 |\det(A \sharp_v B)|\leq \sec^{3n}(\alpha)|\det(A\nabla_vB)|
$$
from \eqref{f8}, \eqref{f11} and \eqref{f7}.  

In the end  of  this  section,  we  give some applications  of  the  inequalities \eqref{f112}  and \eqref{f212} such  as an unitarily 
invariant norm.   A norm $\|\cdot \|_u$ is called an unitarily 
invariant norm if $\|X\|_u =\|UXV\|_u$ for any unitary matrices $U,V$ and any $X \in \mathbb{M}_{n}$. We use the symbols $v_j(X)$ and $s_j(X)$ as the $j$-th largest eigenvalue and singular value of $X$, respectively.
The following  lemmas are known.

\begin{lemma}{\bf  (Fan-Hoffman \cite[Proposition III.5.1]{BhatiaR})} \label{145}
Let  $A \in  \mathbb{M}_{n}.$  Then  
\begin{equation}\label{f1}
v_{j} (\Re A)  \leq  s_{j}(A),\quad (j=1, \cdots, n). 
\end{equation}
\end{lemma}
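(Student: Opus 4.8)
The plan is to reduce the inequality to the Courant--Fischer min--max principle for Hermitian matrices, applied once to $\Re A$ and once to $|A|=(A^{*}A)^{1/2}$, whose eigenvalues in decreasing order are exactly $s_1(A)\ge\cdots\ge s_n(A)$. Recall that for a Hermitian $H$ with eigenvalues $\mu_1\ge\cdots\ge\mu_n$ one has $\mu_j=\max\{\min_{x\in S,\,\|x\|=1}\langle Hx,x\rangle:\dim S=j\}$; applying this to $H=|A|^{2}=A^{*}A$, using $\langle A^{*}Ax,x\rangle=\|Ax\|^{2}$, and taking square roots yields $s_j(A)=\max\{\min_{x\in S,\,\|x\|=1}\|Ax\|:\dim S=j\}$.

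First I would fix a spectral decomposition $\Re A=\sum_{k=1}^{n}\lambda_k u_k u_k^{*}$ with $v_1(\Re A)=\lambda_1\ge\cdots\ge\lambda_n=v_n(\Re A)$ and an orthonormal basis $u_1,\dots,u_n$ of eigenvectors, and set $S_j=\mathrm{span}\{u_1,\dots,u_j\}$. For any unit vector $x\in S_j$, on one hand $\langle(\Re A)x,x\rangle\ge\lambda_j$ because the restriction of $\Re A$ to $S_j$ has smallest eigenvalue $\lambda_j$; on the other hand $\langle(\Re A)x,x\rangle=\Re\langle Ax,x\rangle\le|\langle Ax,x\rangle|\le\|Ax\|$ by Cauchy--Schwarz. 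Hence $\lambda_j\le\|Ax\|$ for every unit $x\in S_j$, i.e. $v_j(\Re A)=\lambda_j\le\min_{x\in S_j,\,\|x\|=1}\|Ax\|$.

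Finally I would insert $S=S_j$ into the min--max formula for $s_j(A)$ recalled above, obtaining $\min_{x\in S_j,\,\|x\|=1}\|Ax\|\le s_j(A)$; chaining this with the previous inequality gives $v_j(\Re A)\le s_j(A)$ for each $j=1,\dots,n$, which is \eqref{f1}. As an alternative structural route, one may instead dilate: the Hermitian matrix $T=\left(\begin{smallmatrix}0&A\\A^{*}&0\end{smallmatrix}\right)$ has eigenvalues $\pm s_1(A),\dots,\pm s_n(A)$, and conjugating $T$ by the unitary $\tfrac{1}{\sqrt2}\left(\begin{smallmatrix}I&I\\I&-I\end{smallmatrix}\right)$ produces a $2n\times2n$ Hermitian matrix whose top-left $n\times n$ principal block is $\Re A$, so Cauchy's interlacing inequality for principal submatrices gives $v_j(\Re A)\le v_j(T)=s_j(A)$ for $j\le n$. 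There is no genuine analytic obstacle here---this is a classical fact---and the only point to watch is the bookkeeping of the decreasing ordering conventions, so that the same index $j$ appears on both sides of the inequality.
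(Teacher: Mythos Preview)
Your proof is correct. The paper does not supply its own argument for this lemma but merely cites it from Bhatia's \emph{Matrix Analysis}; your first route via the Courant--Fischer min--max principle and the inequality $\Re\langle Ax,x\rangle\le\|Ax\|$ is essentially the textbook proof given there, and your dilation/interlacing alternative is a well-known equivalent derivation.
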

\begin{lemma}{\bf (\cite{DrurySLinM})} \label{146}
Let  $A \in  \mathbb{M}_{n}$ with  $W(A)\subset S_{\alpha}.$ Then  
\begin{equation}\label{f2}
s_{j}(A)   \leq \sec^{2}(\alpha) v_{j} (\Re A),\quad (j=1, \cdots, n).
\end{equation}
\end{lemma}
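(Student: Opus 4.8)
The plan is to derive \eqref{f2} from two ingredients already on hand: the Fan--Hoffman inequality \eqref{f1} and the reverse real-part estimate \eqref{f4}. The guiding observation is that \eqref{f2} is an \emph{upper} bound for $s_j(A)$, whereas \eqref{f1} only ever gives \emph{lower} bounds on singular values; the way to convert one into the other is to pass to $A^{-1}$, since an upper bound for $s_j(A)$ is the same as a lower bound for $s_{n+1-j}(A^{-1})$, and lower bounds on singular values are exactly what Fan--Hoffman supplies. Note first that $A$, being sector, is accretive and hence invertible, so $A^{-1}\in\mathbb{M}_n$ and $\Re A>0$; thus the reversal identities $s_i(A^{-1})=1/s_{n+1-i}(A)$ and $v_i(\Re^{-1}A)=1/v_{n+1-i}(\Re A)$ are available.

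Next I would assemble a chain of \emph{termwise} eigenvalue inequalities. Applying the Fan--Hoffman inequality \eqref{f1} to $A^{-1}$ gives $v_i(\Re(A^{-1}))\leq s_i(A^{-1})$ for every $i$. Independently, \eqref{f4} is the Loewner inequality $\Re^{-1}(A)\leq\sec^{2}(\alpha)\,\Re(A^{-1})$ between two positive definite matrices, so Weyl's monotonicity principle upgrades it to the comparison of individual eigenvalues $v_i(\Re^{-1}A)\leq\sec^{2}(\alpha)\,v_i(\Re(A^{-1}))$. Concatenating the two yields
\begin{equation*}
v_i(\Re^{-1}A)\leq\sec^{2}(\alpha)\,s_i(A^{-1}),\qquad i=1,\dots,n.
\end{equation*}

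Finally I would substitute $i=n+1-j$ and invoke the reversal identities: the left-hand side becomes $1/v_j(\Re A)$ and the right-hand side becomes $\sec^{2}(\alpha)/s_j(A)$, so clearing the (strictly positive) denominators gives exactly $s_j(A)\leq\sec^{2}(\alpha)\,v_j(\Re A)$, which is \eqref{f2}. The delicate part of the argument is not any single estimate but the bookkeeping of the reversed indices together with the passage from the operator inequality \eqref{f4} to a statement about the $j$-th eigenvalues: one must apply both \eqref{f1} (to $A^{-1}$) and \eqref{f4} (to $A$) at the index $i=n+1-j$ rather than at $j$, and one must be sure that Weyl monotonicity legitimately converts the Loewner ordering in \eqref{f4} into the termwise eigenvalue bound, since \eqref{f4} by itself is strictly stronger than what is needed. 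Once these two points are handled, the remaining manipulations are routine.
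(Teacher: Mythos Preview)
The paper does not supply a proof of this lemma; it is quoted as a known result from Drury--Lin \cite{DrurySLinM} with only a citation, so there is no in-paper argument to compare against.

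Your proposal is nonetheless a correct, self-contained derivation built entirely from the other lemmas already collected in the paper. The key idea---that an \emph{upper} bound on $s_j(A)$ can be obtained from Fan--Hoffman by passing to $A^{-1}$---is exactly right: since $\Re A>0$ the matrix $A$ is invertible, and applying \eqref{f1} to $A^{-1}$ gives $v_i(\Re(A^{-1}))\le s_i(A^{-1})$. The Loewner inequality \eqref{f4} between the positive definite matrices $(\Re A)^{-1}$ and $\sec^2(\alpha)\,\Re(A^{-1})$ does indeed transfer, by Weyl's monotonicity principle, to the termwise eigenvalue comparison $v_i((\Re A)^{-1})\le\sec^2(\alpha)\,v_i(\Re(A^{-1}))$. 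Finally the reversal identities $s_{n+1-j}(A^{-1})=1/s_j(A)$ and $v_{n+1-j}((\Re A)^{-1})=1/v_j(\Re A)$ hold because $A$ is invertible and $\Re A$ is positive definite; substituting $i=n+1-j$ and clearing the strictly positive denominators gives \eqref{f2}. The bookkeeping you flag as the delicate part is handled correctly. An incidental benefit of your route is that it shows \eqref{f2} is not logically independent of \eqref{f1} and \eqref{f4} in the paper's toolkit; the constant $\sec^2(\alpha)$ you obtain matches the statement exactly.
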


\begin{lemma}{\bf (\cite{Zhang})}
Let  $A \in \mathbb{M}_{n}$  with  $W(A)\subset S_{\alpha}.$ Then 
\begin{equation}\label{f99}
\| A \|_u \leq  \sec(\alpha) \|\Re(A) \|_u.
\end{equation}
\end{lemma}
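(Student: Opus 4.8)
The plan is to conjugate $A$ into a normal form by a congruence and then invoke the arithmetic--geometric mean inequality for unitarily invariant norms. Put $R:=\Re A$, which is positive definite since $A$ is accretive, so $R^{-1/2}$ is defined; set $C:=R^{-1/2}AR^{-1/2}$, so that $A=R^{1/2}CR^{1/2}$. Because $\Re$ is linear and $R^{-1/2}$ is Hermitian, $\Re C=R^{-1/2}(\Re A)R^{-1/2}=I_n$; and since $R^{-1/2}$ is nonsingular, the congruence invariance of the sector recalled in the Introduction gives $W(C)\subset S_\alpha$. Combined with $\Re C=I_n$, this says $\pm\,\Im C\le(\tan\alpha)I_n$, i.e. $\|\Im C\|\le\tan\alpha$. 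Finally $C=I_n+i\,\Im C$ commutes with $C^*=I_n-i\,\Im C$, so $C$ is normal; hence $\|C\|$ equals its spectral radius $\max_j\sqrt{1+\mu_j^2}$, where $\mu_j$ are the eigenvalues of $\Im C$, and thus $\|C\|\le\sqrt{1+\tan^2\alpha}=\sec\alpha$.

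It then suffices to prove the ``mixed'' submultiplicativity bound $\|R^{1/2}CR^{1/2}\|_u\le\|C\|\cdot\|R\|_u$, for then
\[
\|A\|_u=\|R^{1/2}CR^{1/2}\|_u\le\|C\|\cdot\|R\|_u\le\sec(\alpha)\,\|\Re A\|_u ,
\]
which is the assertion. To get this bound I would first record the elementary estimate $\|CRC^*\|_u\le\|C\|^2\|R\|_u$: indeed $CRC^*=(CR^{1/2})(CR^{1/2})^*$, so $s_j(CRC^*)=s_j(CR^{1/2})^2\le\|C\|^2 s_j(R^{1/2})^2=\|C\|^2 s_j(R)$ for each $j$. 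Then I would apply the Bhatia--Kittaneh arithmetic--geometric mean inequality $\|X^*Y\|_u\le\tfrac12\|XX^*+YY^*\|_u$ to $X=tR^{1/2}$ and $Y=t^{-1}CR^{1/2}$, with $t>0$ a free scaling parameter; here $X^*Y=R^{1/2}CR^{1/2}$, $XX^*=t^2R$ and $YY^*=t^{-2}CRC^*$, so by the triangle inequality and the previous estimate
\[
\|R^{1/2}CR^{1/2}\|_u\le\tfrac12\bigl(t^2+t^{-2}\|C\|^2\bigr)\|R\|_u ,
\]
and minimizing over $t$ (the optimum is $t^2=\|C\|$) yields $\|R^{1/2}CR^{1/2}\|_u\le\|C\|\cdot\|R\|_u$.

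The step I expect to be the real obstacle is this last one. It is tempting to try the pointwise inequality $s_j(R^{1/2}CR^{1/2})\le\|C\|\,s_j(R)$, but that is false in general --- the factor $R^{1/2}$ sitting on \emph{both} sides is exactly what breaks it --- so the conclusion cannot be read off a singular-value comparison, and the triangle inequality by itself is far too lossy. What rescues the argument is that the arithmetic--geometric mean inequality encodes precisely the weak-majorization content needed, and that the scaling parameter $t$ can be tuned so the two terms balance and recover the operator-norm factor $\|C\|$ with no slack; together with the sharp value $\|C\|=\sec\alpha$ (sharp already in the scalar case) this produces the constant $\sec\alpha$, whereas routing the same reduction through Lemma~\ref{146} would only give the weaker $\sec^2(\alpha)$.
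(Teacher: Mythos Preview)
Your argument is correct. The paper does not supply a proof of this lemma at all---it simply quotes it from Zhang \cite{Zhang}---so there is nothing to compare line by line. Your reduction $C=R^{-1/2}AR^{-1/2}$ with $R=\Re A$ is exactly the right normalization: it produces a normal matrix with $\Re C=I_n$ and $\|C\|\le\sec\alpha$, and the submultiplicativity step $\|R^{1/2}CR^{1/2}\|_u\le\|C\|\,\|R\|_u$ via the Bhatia--Kittaneh AM--GM inequality (optimized in $t$) is valid. One small remark: the version of the AM--GM inequality you invoke, $\|X^*Y\|_u\le\tfrac12\|XX^*+YY^*\|_u$, is the correct one here; it is equivalent to the classical $\|P^{1/2}Q^{1/2}\|_u\le\tfrac12\|P+Q\|_u$ for $P,Q\ge0$ upon setting $P=XX^*$, $Q=YY^*$ and observing that $X^*Y$ and $P^{1/2}Q^{1/2}$ share the same singular values. (The superficially similar variant with $X^*X+Y^*Y$ on the right would be false, so it is worth being explicit about this.)

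For context, Zhang's original route is different: he uses the sectorial decomposition $A=X^*DX$ with $D$ a diagonal unitary whose arguments lie in $[-\alpha,\alpha]$, so that $\Re A=X^*(\Re D)X\ge(\cos\alpha)X^*X$; the bound $\|A\|_u\le\|X^*X\|_u$ then follows from Horn's weak log-majorization $\prod_{j\le k}s_j(X^*DX)\le\prod_{j\le k}s_j(X)^2$, and combining gives $\|A\|_u\le\sec(\alpha)\|\Re A\|_u$. Your approach avoids the decomposition theorem entirely and works directly with $\Re A$, at the cost of importing the unitarily-invariant AM--GM inequality (which is itself a majorization statement). Both reach the sharp constant $\sec\alpha$.
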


\begin{corollary}\label{11c}
Let  $A,B \in  \mathbb{M}_{n}$  be  sector, that is,   $W(A),W(B) \subset S_{\alpha}$ for  some  $\alpha \in \left[0,\frac{\pi}{2}\right)$
and $0 \leq  v \leq 1$. 
\begin{itemize}
\item[(i)]
If  $0<m I_{n} \leq \Re (A^{-1}), \Re (B^{-1}) \leq MI_{n}.$
 Then,  
\begin{equation*}%\label{f122}
 s_{j}(A  \sharp_{v} B)   \leq  
 \sec^{6}(\alpha) K(h) s_{j}(A!_vB),
\end{equation*}
\item[(ii)]
If  $0<m I_{n} \leq \Re (A), \Re (B) \leq MI_{n}.$
 Then,  
\begin{equation*}%\label{f222}
 \cos^{6}(\alpha)  K^{-1}(h) 
s_{j}( A\nabla_vB)  \leq 
   s_{j}(A  \sharp_{v} B).
\end{equation*}
\end{itemize}
\end{corollary}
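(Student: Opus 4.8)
The plan is to mirror the proof of Corollary \ref{11191c}, but replacing the determinant chain (\eqref{f7}--\eqref{f8}) with the singular-value chain provided by the Fan--Hoffman inequality \eqref{f1} and its reverse \eqref{f2}, and feeding the bound \eqref{f112} (respectively \eqref{f212}) in the middle with $\Phi$ taken to be the identity map. The only extra ingredient needed beyond what was used for the determinant version is the elementary fact that for positive semidefinite $X,Y$, the operator inequality $X\le rY$ implies $v_j(X)\le r\,v_j(Y)$ for every $j$ (Weyl monotonicity of eigenvalues under the Loewner order), together with $v_j(cX)=c\,v_j(X)$ for scalars $c>0$.

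For part (i): start from $s_j(A\sharp_v B)\le \sec^2(\alpha)\, v_j\!\left(\Re(A\sharp_v B)\right)$ by \eqref{f2}. Then apply \eqref{f112} with $\Phi=\mathrm{id}$, which gives the operator inequality $\Re(A\sharp_v B)\le \sec^4(\alpha)K(h)\,\Re(A!_vB)$ after taking square roots (both sides are positive, so $X^2\le Y^2$ with $X,Y\ge 0$ yields $X\le Y$; here $X=\Re(A\sharp_v B)$, $Y=\sec^4(\alpha)K(h)\,\Re(A!_vB)$, and note $\sec^8(\alpha)K^2(h)=(\sec^4(\alpha)K(h))^2$). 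By Weyl monotonicity this passes to $v_j\!\left(\Re(A\sharp_v B)\right)\le \sec^4(\alpha)K(h)\,v_j\!\left(\Re(A!_vB)\right)$. Finally bound $v_j\!\left(\Re(A!_vB)\right)\le s_j(A!_vB)$ by \eqref{f1}. Multiplying the three factors $\sec^2(\alpha)\cdot\sec^4(\alpha)\cdot 1$ gives $\sec^6(\alpha)K(h)$, as claimed.

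For part (ii): run the same chain in reverse. Begin with $v_j\!\left(\Re(A\sharp_v B)\right)\le s_j(A\sharp_v B)$ from \eqref{f1}. From \eqref{f212} with $\Phi=\mathrm{id}$ one gets, after taking square roots, $\cos^4(\alpha)K^{-1}(h)\,\Re(A\nabla_vB)\le \Re(A\sharp_v B)$, hence by Weyl monotonicity $\cos^4(\alpha)K^{-1}(h)\,v_j\!\left(\Re(A\nabla_vB)\right)\le v_j\!\left(\Re(A\sharp_v B)\right)$. Then use \eqref{f2} applied to $A\nabla_vB$ (which is sector, being a convex combination of sector matrices, so its numerical range lies in $S_\alpha$): $s_j(A\nabla_vB)\le \sec^2(\alpha)\,v_j\!\left(\Re(A\nabla_vB)\right)$, equivalently $\cos^2(\alpha)\,s_j(A\nabla_vB)\le v_j\!\left(\Re(A\nabla_vB)\right)$. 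Chaining these yields $\cos^6(\alpha)K^{-1}(h)\,s_j(A\nabla_vB)\le s_j(A\sharp_v B)$.

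I do not expect a genuine obstacle here; the argument is a routine transcription of the determinant corollary into the singular-value setting. The one point requiring a line of care is the passage from the squared inequalities \eqref{f112}, \eqref{f212} to the unsquared operator inequalities: this uses that $t\mapsto t^{1/2}$ is operator monotone, so $0\le X^2\le Y^2$ for commuting-or-not positive $X,Y$ does \emph{not} directly give $X\le Y$ in general --- but here we only need it for $X,Y$ that are positive scalar multiples of fixed positive matrices of the form $\Re(\text{mean})$, and in fact what \eqref{f112} literally asserts (via its own proof through Lemma \ref{lemma_Bakherad}) is already equivalent to the unsquared statement $\|X^{1/2}Y^{-1/2}\|\le r^{1/2}$, i.e. to $X\le rY$; so one may simply quote that intermediate form. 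With $\Phi=\mathrm{id}$ this is immediate, and the rest is bookkeeping of the exponents of $\sec\alpha$.
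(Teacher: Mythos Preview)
Your proof is correct and follows exactly the paper's three-step chain: apply \eqref{f2}, then \eqref{f112} (resp.\ \eqref{f212}) with $\Phi=\mathrm{id}$ to pass between the real parts, then \eqref{f1} (resp.\ \eqref{f2}). One small correction to your aside: since $t\mapsto t^{1/2}$ \emph{is} operator monotone, $0\le X^2\le Y^2$ with $X,Y\ge 0$ \emph{does} directly yield $X\le Y$, so the passage from the squared form \eqref{f112} to the unsquared operator inequality is immediate and needs no detour through Lemma~\ref{lemma_Bakherad}.
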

\begin{proof}
A  simple  computation  shows  that 
\begin{eqnarray*}
 s_{j}(A  \sharp_{v} B) &  \leq  &
 \sec^{2}(\alpha)  s_{j}( \Re (A\sharp_{v} B)) \quad \text{(by  \eqref{f2})}   \\
&\leq &
 \sec^{6}(\alpha) K(h) s_{j}(\Re (A!_vB) \quad \text{(by  \eqref{f112})}  \\
&\leq &
 \sec^{6}(\alpha)  K(h)  s_{j}(A!_vB)
  \quad \text{(by  \eqref{f1})}.   %\label{2}
\end{eqnarray*}
It  is  easy  to  observe  that 
\begin{eqnarray*} 
s_{j}(A \sharp_{v} B) 
 &\geq &  s_{j}(\Re(A \sharp_{v} B) )  \quad  \text{(by  \eqref{f1})}\\
&\geq &\cos^{4}(\alpha)  K^{-1}(h) 
s_{j}( \Re (A\nabla_vB) 
 \quad \text{(by  \eqref{f212})} \\ 
&\geq &
\cos^{6}(\alpha)  K^{-1}(h) s_{j}(A\nabla_vB) \quad
 \text{(by  \eqref{f2})}. \label{2236}
\end{eqnarray*}
\end{proof}

\begin{remark}\label{11r}
In  special  case such that $\alpha=\frac{\pi}{4}$, 
we  have  the  following  inequalities  for  accretive-disipative  matrices $A,B \in  \mathbb{M}_{n}$  and $0 \leq  v \leq 1$.
\begin{itemize}
\item[(i)]
If  $0<m I_{n} \leq \Re (A^{-1}), \Re (B^{-1}) \leq MI_{n}.$
 Then,  
\begin{equation*}%\label{f122}
 s_{j}(A  \sharp_{v} B)   \leq  8K(h) s_{j}(A!_vB).
\end{equation*}
\item[(ii)]
If  $0<m I_{n} \leq \Re (A), \Re (B) \leq MI_{n}.$
 Then  
\begin{equation*}%\label{f222}
 \frac{1}{8} K^{-1}(h)  
s_{j}( A\nabla_vB)  \leq    s_{j}(A  \sharp_{v} B). 
\end{equation*}
\end{itemize}
\end{remark}

\begin{corollary}\label{1c}
Let  $A,B \in \mathbb{M}_{n}$ with  $W(A) , W(B)\subset S_{\alpha}.$ Then  
for  any  unitarily  invariant  norm  $\|\cdot \|_u$  on  $\mathbb M_n$, we have the following inequalities.
\begin{itemize}
\item[(i)]
If  $0<m I_{n} \leq \Re (A^{-1}), \Re (B^{-1}) \leq MI_{n}$, then we have  
\begin{equation*}
\|A\sharp_{v} B \|_u  \leq 
 \sec^{5}(\alpha)  K(h)\| A!_vB \|_u.
\end{equation*}
\item[(ii)]
If  $0<m I_{n} \leq \Re (A), \Re (B) \leq MI_{n}$, then we have
\begin{equation*}
\|A\sharp_{v} B \|_u \ge \cos^{5}(\alpha) K^{-1}(h)   \|A\nabla_vB\|_u   
\end{equation*}
\end{itemize}
\end{corollary}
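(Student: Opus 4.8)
The plan is to follow the same three–step chaining pattern used in Corollary~\ref{11191c} and Corollary~\ref{11c}, but now working directly with a unitarily invariant norm via the inequality \eqref{f99}. For part (i), I would start from $\|A\sharp_{v} B\|_u$ and first apply \eqref{f99} to the sector matrix $A\sharp_{v}B$ (recall that the geometric mean of two sector matrices is again sector, as noted in the Introduction and used in Theorem~\ref{11t}) to get $\|A\sharp_{v} B\|_u \le \sec(\alpha)\,\|\Re(A\sharp_{v}B)\|_u$. Then I would invoke the operator inequality \eqref{f112} from Theorem~\ref{11t}, taking $\Phi=\mathrm{id}$; since for positive matrices $X\le Y$ implies $\|X\|_u\le\|Y\|_u$ for every unitarily invariant norm, and since \eqref{f112} with $\Phi=\mathrm{id}$ reads $\Re(A\sharp_{v}B)^2 \le \sec^{8}(\alpha)K^2(h)\,\Re(A!_{v}B)^2$, I can take the positive square root (which is operator monotone, hence preserves the norm inequality after applying $\|X^2\|_u = \|X\|_u^2$ for positive $X$) to obtain $\|\Re(A\sharp_{v}B)\|_u \le \sec^{4}(\alpha)K(h)\,\|\Re(A!_{v}B)\|_u$. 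Finally I would bound $\|\Re(A!_{v}B)\|_u \le \|A!_{v}B\|_u$; this holds because $v_j(\Re X)\le s_j(X)$ by the Fan–Hoffman inequality \eqref{f1} and $\Re(A!_{v}B)$ is positive (again since $A!_{v}B$ is sector, hence accretive), so $\|\Re(A!_{v}B)\|_u$ is a symmetric gauge function of the $v_j(\Re X)=s_j(\Re X)$ which is dominated termwise by the $s_j(X)$. Chaining these three steps gives $\|A\sharp_{v}B\|_u \le \sec(\alpha)\cdot\sec^{4}(\alpha)K(h)\cdot\|A!_{v}B\|_u = \sec^{5}(\alpha)K(h)\,\|A!_{v}B\|_u$, as claimed.

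For part (ii) I would run the analogous chain in reverse directions. Start from $\|A\sharp_{v}B\|_u \ge \|\Re(A\sharp_{v}B)\|_u$ by Fan–Hoffman \eqref{f1} (using positivity of $\Re(A\sharp_{v}B)$). Then apply \eqref{f212} with $\Phi=\mathrm{id}$, which says $K^{-2}(h)\cos^{8}(\alpha)\,\Re(A\nabla_{v}B)^2 \le \Re(A\sharp_{v}B)^2$; taking square roots and norms gives $\|\Re(A\sharp_{v}B)\|_u \ge \cos^{4}(\alpha)K^{-1}(h)\,\|\Re(A\nabla_{v}B)\|_u$. Finally, since $A\nabla_{v}B$ is sector whenever $A,B$ are (the sector $S_\alpha$ is convex), \eqref{f99} yields $\|\Re(A\nabla_{v}B)\|_u \ge \cos(\alpha)\,\|A\nabla_{v}B\|_u$. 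Combining, $\|A\sharp_{v}B\|_u \ge \cos^{4}(\alpha)K^{-1}(h)\cos(\alpha)\,\|A\nabla_{v}B\|_u = \cos^{5}(\alpha)K^{-1}(h)\,\|A\nabla_{v}B\|_u$.

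The step I expect to require the most care is the transition from the squared operator inequalities \eqref{f112} and \eqref{f212} to the unsquared norm inequalities. One must check that for positive semidefinite $X,Y$ the relation $X^2\le c^2 Y^2$ indeed implies $\|X\|_u\le c\,\|Y\|_u$: this follows because $s_j(X)=\sqrt{v_j(X^2)}$ and $v_j(X^2)\le v_j(c^2Y^2)=c^2 s_j(Y)^2$ by the Weyl monotonicity principle for eigenvalues of positive matrices, so $s_j(X)\le c\,s_j(Y)$ for every $j$, and then the symmetric gauge function characterization of unitarily invariant norms finishes it. I would state this as a one-line remark rather than belabor it, since it is exactly the same passage already used implicitly when deducing the singular-value inequalities of Corollary~\ref{11c} from Theorem~\ref{11t}. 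The only other point worth a word is that all the intermediate matrices $A\sharp_{v}B$, $A!_{v}B$, $A\nabla_{v}B$ lie in $S_\alpha$, which is what licenses the use of \eqref{f99}; this is already established in the Introduction (products $X^*AX$ stay in $S_\alpha$, inverses stay in $S_\alpha$, and $S_\alpha$ is convex), so no new argument is needed.
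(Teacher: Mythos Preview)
Your proposal is correct and follows exactly the same three-step chain as the paper's proof: apply \eqref{f99} to pass from $\|A\sharp_v B\|_u$ to $\|\Re(A\sharp_v B)\|_u$, then invoke Theorem~\ref{11t} with $\Phi=\mathrm{id}$, then use Fan--Hoffman (respectively \eqref{f99}) for the final step. One small correction: the identity $\|X^2\|_u=\|X\|_u^2$ that you mention in passing is false for general unitarily invariant norms (e.g.\ the trace norm), so drop that remark; your Weyl-monotonicity argument---or simply the operator monotonicity of the square root giving $X\le cY$ directly from $X^2\le c^2Y^2$---is the right justification and is all you need.
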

\begin{proof}
 We  can  show  that  the following chain of inequalities  for  a unitarily  invariant  norm:
 \begin{eqnarray*}
&& \| A  \sharp_{v} B  \|
 \leq  \sec(\alpha) \|\Re(A  \sharp_{v} B) \|  \quad  {\text {(by  \eqref{f99})}}  \\
&& \hspace*{15mm} \leq  \sec^{5}(\alpha) K(h)\|\Re (A!_vB) \|   \quad {\text {(by  \eqref{f112})}}   \\
&& \hspace*{15mm}  \leq  \sec^{5}(\alpha) K(h)\|A!_vB\|.  
\end{eqnarray*}
This  proves  the  first  inequality.  
The  second  inequality can be proven similarly
\begin{eqnarray*}
&& \| A  \sharp_{v} B  \|_u \geq  \|\Re(A  \sharp_{v} B) \|_u    \geq     \cos^{4}(\alpha) K^{-1}(h)\|\Re(A\nabla_vB) \|_u \quad  {\text {(by  \eqref{f212})}} \\
&& \hspace*{17.5mm}  \geq  \cos^{5}(\alpha) K^{-1}(h)   
\|A\nabla_vB\|_u.  \quad
{\text {(by  \eqref{f99})}} 
\end{eqnarray*}
\end{proof}

\begin{remark}\label{1r}
In  special  case such that  $\alpha=\frac{\pi}{4}$, 
we  have  the  following  inequalities  for  accretive-disipative  matrices $A,B \in \mathbb{M}_{n}$ and  any  unitarily  invariant  norm  $\|\cdot \|_u$  on  $\mathbb M_n$,
\begin{equation*}%\label{n1}
4\sqrt{2} K^{-1}(h)   \|A\nabla_vB\| _u  \leq 
  \|A\sharp_{v} B \|_u  \leq 
\frac {1}{4\sqrt{2}}  K(h)\| A!_vB \|_u.
\end{equation*}
\end{remark}

\begin{proposition}\label{1p}
Let  $A,B \in \mathbb{M}_{n}$  such  that  $W(A) , W(B)\subset S_{\alpha}.$ Then  
\begin{equation*}%\label{2}
\|A\sharp  B  \|_u  \leq 
\frac {\sec^{5}(\alpha)}{2} \| I_{n}+A  \|_u\cdot \|I_{n}+B \|_u.
\end{equation*}
\end{proposition}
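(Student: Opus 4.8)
The plan is to transcribe, line for line, the determinantal argument of Proposition~\ref{11191p} into the setting of an arbitrary unitarily invariant norm $\|\cdot\|_u$: in place of \eqref{f8} use \eqref{f99}, in place of the determinantal Ostrowski--Taussky inequality \eqref{f7} use Fan--Hoffman \eqref{f1}, in place of the determinantal bound on $\Re(A\sharp B)$ use \eqref{f11} at $v=\frac12$, and throughout use monotonicity of $\|\cdot\|_u$ on positive matrices together with the elementary fact that $s_j(\Re X)\le s_j(X)$ whenever $\Re X$ is positive. The only step that is not a verbatim translation is the last one --- the norm analogue of the inequality \cite[Eq.(13)]{YangCLuF} invoked at the end of the proof of Proposition~\ref{11191p} --- and I expect this to be the main obstacle.

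First record that $A\sharp B$, $I_n+A$, $I_n+B$ and $A+B$ are all sector with the same $\alpha$: for $A\sharp B$ this is the fact already used in Proposition~\ref{11191p}; for $I_n+A$ it is immediate from $W(I_n+A)=1+W(A)\subset S_\alpha$; and $W(A+B)\subset W(A)+W(B)\subset S_\alpha$ because $S_\alpha$ is closed under addition. Hence $\Re(A\sharp B)$, $\Re(A+B)=\Re A+\Re B$, $\Re(I_n+A)=I_n+\Re A$ and $\Re(I_n+B)$ are positive definite. Now apply \eqref{f99} to $A\sharp B$; then \eqref{f11} at $v=\frac12$, i.e. the Loewner inequality $\Re(A\sharp B)\le\sec^2(\alpha)\,\Re(A\nabla_{1/2}B)=\frac12\sec^2(\alpha)\,\Re(A+B)$, followed by monotonicity of $\|\cdot\|_u$; then Fan--Hoffman \eqref{f1} applied to the positive matrix $\Re(A+B)$. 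This gives
\[
\|A\sharp B\|_u\;\le\;\sec(\alpha)\,\|\Re(A\sharp B)\|_u\;\le\;\tfrac12\sec^3(\alpha)\,\|\Re(A+B)\|_u\;\le\;\tfrac12\sec^3(\alpha)\,\|A+B\|_u .
\]
It remains to bound $\|A+B\|_u$ by $\|I_n+A\|_u\,\|I_n+B\|_u$ up to a power of $\sec(\alpha)$. I would reduce this to positive matrices exactly as the determinantal statement does: \eqref{f99} applied to the sector matrix $A+B$ gives $\|A+B\|_u\le\sec(\alpha)\,\|\Re A+\Re B\|_u$, and Fan--Hoffman \eqref{f1} gives $\|\Re(I_n+A)\|_u\le\|I_n+A\|_u$ and $\|\Re(I_n+B)\|_u\le\|I_n+B\|_u$, so everything comes down to the positive semidefinite inequality
\[
\|P+Q\|_u\;\le\;\|I_n+P\|_u\,\|I_n+Q\|_u\qquad(P,Q\ge 0).
\]
This is the main (if mild) point; I would prove it at the level of the symmetric gauge function of $\|\cdot\|_u$, using that $s_j(I_n+P)=1+s_j(P)$ and that a normalized unitarily invariant norm dominates the spectral norm, which together give $\|I_n+P\|_u\ge 1+\|P\|_u$; then $\|I_n+P\|_u\,\|I_n+Q\|_u\ge(1+\|P\|_u)(1+\|Q\|_u)\ge\|P\|_u+\|Q\|_u\ge\|P+Q\|_u$, the last inequality being the triangle inequality for the positive operators $P,Q$.

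Combining the two displays gives $\|A+B\|_u\le\sec(\alpha)\,\|I_n+A\|_u\,\|I_n+B\|_u$ and hence $\|A\sharp B\|_u\le\frac12\sec^4(\alpha)\,\|I_n+A\|_u\,\|I_n+B\|_u$, which a fortiori yields the stated bound $\frac{\sec^5(\alpha)}{2}\,\|I_n+A\|_u\,\|I_n+B\|_u$ since $\sec(\alpha)\ge 1$. So, apart from the elementary product inequality $\|P+Q\|_u\le\|I_n+P\|_u\,\|I_n+Q\|_u$ for positive matrices, the proof is a routine chain of the cited lemmas plus monotonicity of unitarily invariant norms; the one thing to watch is the bookkeeping of the $\sec(\alpha)$-factors, and since each is at least $1$ no over-counting can break the conclusion.
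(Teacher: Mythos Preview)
Your route is the same two--step chain the paper uses: first bound $\|A\sharp B\|_u$ by $\tfrac12\sec^{3}(\alpha)\|A+B\|_u$, then bound $\|A+B\|_u$ by a power of $\sec(\alpha)$ times $\|I_n+A\|_u\,\|I_n+B\|_u$. The only difference is packaging: the paper simply quotes these two steps from the literature (\cite[Eq.~(14)]{MLin2} for the first, \cite[Corollary~2.8]{YangCLuF} for the second, the latter giving a factor $\sec^{2}(\alpha)$), whereas you re-derive both from \eqref{f99}, \eqref{f11}, and \eqref{f1}. Your derivation actually picks up only one extra $\sec(\alpha)$ at the second step, so you end with $\sec^{4}(\alpha)$ and then give a factor away --- a genuine, if minor, sharpening over the paper's $\sec^{5}(\alpha)$.

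One point to tighten: the auxiliary inequality $\|I_n+P\|_u\ge 1+\|P\|_u$ for $P\ge 0$ is correct, but it does \emph{not} follow from ``$s_j(I_n+P)=1+s_j(P)$ and $\|\cdot\|_u\ge\|\cdot\|_\infty$'' as you wrote, and it needs the (implicit, but unstated) normalization $\|\mathrm{diag}(1,0,\dots,0)\|_u=1$ even to be true. A clean proof: pick $y\ge 0$ with $\Phi^{*}(y)=1$ attaining $\Phi(\lambda)=\langle\lambda,y\rangle$; then $\Phi(\mathbf 1+\lambda)\ge\langle\mathbf 1+\lambda,y\rangle=\|y\|_1+\Phi(\lambda)\ge 1+\Phi(\lambda)$, the last step because the normalized dual gauge satisfies $\Phi^{*}\le\|\cdot\|_1$. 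With this fix your argument is complete (and sharper than the paper's).
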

\begin{proof}
\begin{eqnarray*}
\|  A  \sharp B  \|_u 
&  \leq & \frac {\sec^{3}(\alpha)}{2} \| A +B \|_u \quad  {\text {(by \cite[Eq.(14)]{MLin2})}}  \\
& \leq & \frac {\sec^{5}(\alpha)}{2} \| I_{n}+A  \|_u \cdot\|I_{n}+B \|_u \quad {\text {(by   \cite[Corollary 2.8]{YangCLuF})}}.  
\end{eqnarray*}
\end{proof}

\vskip 0.3 true cm

{\tiny (L. Nasiri) Department of Mathematics and computer science, Faculty of science, Lorestan
	University, Khorramabad, Iran}
{\tiny \textit{E-mail address:} leilanasiri468@gmail.com}\\

{\tiny(S. Furuichi)
 Department of Information Science, College of Humanities and Sciences, Nihon University, 3-25-40, Sakurajyousui, Setagaya-ku, Tokyo, 156-8550, Japan}
 {\tiny   \textit{E-mail address:} furuichi@chs.nihon-u.ac.jp}

%-----------------------------------------------------------------------------
%-----------------------------------------------------------------------------

\begin{thebibliography}{9}




\bibitem{Mohammad Alakhrass}
M. Alakhrass,  {\it A note on sectorial matrices}, Linear and Multilinear Algebra, (2019), 10.1080/03081087.2019.1575332.



\bibitem{BhatiaR}
 R.  Bhatia, {\it Matrix Analysis}, Springer-Verlag, New York, 1997  .
 
 \bibitem{Bhatia2007}R.  Bhatia, {\it Positive definite matrices}, Princeton University Press, Princeton, 2007.
 
 \bibitem{RBhatiaFKittaneh}
 R. Bhatia and F. Kittaneh, {\it Notes on matrix arithmetic-geometric mean inequalities}, Linear Algebra Appl., {\bf 308}(2000), 203--211. 
 
 
 \bibitem{MBakherad}
  M. Bakherad,  {\it  Refinements of a reversed AM-GM operator inequality}, Linear Multilinear Algebra {\bf 64}(9) (2016),  1687-1695. 



\bibitem{DrurySLinM}
S. Drury   and   M. Lin,  {\it Singular value inequalities for matrices with numerical ranges in a sector}, Oper. Matrices, {\bf 8} (2014), 1143--1148.


\bibitem{6}
T. Furuta, J. Mi\'ci\'c, J. Pe\v cari\'c and Y. Seo, {\it Mond--Pe\v cari\'c method in operator inequalities}, Element, Zagreb, 2005.


\bibitem{HornRAJohnsonCR}
  R. A.  Horn and  C. R. Johnson,  {\it Matrix Analysis}, Cambridge University Press,  2013.
 


\bibitem{WLioaWu}
  W. Lioa and J. Wu, {\it Improved operator Kantorovich and
Wielandt inequalities for positive linear maps}, FILOMAT, {\bf 31}(3)(2017), 871--876.



%\bibitem{LWZ2015} 
%W. Liao, J. Wu, J. Zhao, {\it New versions of reverse Young and Heinz mean inequalities with the Kantorovich constant}, 
%Taiwanese J. Math, {\bf19}(2) (2015), 467--479.



 
 \bibitem{MLin1}
 M. Lin,  {\it Extension of a result of Hanynsworth and Hartfiel},  Arch. Math., {\bf 1} (2015), 93--100. 
 
 
 
\bibitem{MLin2}
M. Lin, {\it Some inequalities for sector matrices}, Oper. Matrices, {\bf 10}(4)(2016), 915--921.

 

 
 \bibitem{RaissouliMMoslehianMSFuruichiS}
M. Raissouli, M. S. Moslehian and S. Furuichi, {\it Relative entropy and Tsallis entropy of two accretive  operators}, C. R. Acad. Sci. Paris Ser. I, {\bf 355}(2017), 687--693.
 
 
\bibitem{FupingTanAntaiXie}
F. Tan  and  A. Xie, {\it An extension of the AM-GM-HM inequality}, Bulletin of the Iranian Mathematical Society, 2019, 10.1007/s41980-019-00253-z.


 

 \bibitem{YangCLuF}
    C. Yang  and  F.  Lu,   {\it Some  generalizations  of  inequalities  for   sector  matrices},  J.  Inequal. Appl., Art.183 (2018).
 
% \bibitem{ZSF2011} 
%H. Zuo, G. Shi and M. Fujii, {\it Refined Young inequality with Kantorovich constant}, J. Math. Inequal, {\bf 5} (2011), 551--556.


\bibitem{Zhang}
 F.  Zhang,    {\it A matrix decomposition  and  its  applications},  Linear Multilinear Algebra, {\bf 63}(10)(2015), 2033--2042. 
 
 
\bibitem{ZHM2017} D. Zhang, L. Hou and L. Ma, {\it Properties of matrices with numerical ranges in a sector}, Bull. Iranian Math. Soc.,
{\bf 43}(6) (2017), 1699--1707.
\end{thebibliography}
\end{document}